\def\rb{{\it I\hspace{-0.3em}R}}  % the set of Real numbers
\def\scs{{\cal S}}
\def\didots{\mathinner{\mkern1mu\raise1pt\vbox{\kern7pt\hbox{.}}\mkern2mu
\raise4pt\hbox{.}\mkern2mu\raise7pt\hbox{.}\mkern1mu}}
\def\dnZN{\delta^\nabla_{Z,N}}
\def\dnNZ{\delta^\nabla_{N,Z}}
\def\ddZN{\delta^\Delta_{Z,N}}
\def\ddNZ{\delta^\Delta_{N,Z}}
\def\espace1{\medskip}
\begin{document} \bibliographystyle{plain}

\newtheorem{property}[theorem]{Property}
\newtheorem{definition}[theorem]{Definition}
\newtheorem{example}[theorem]{Example}
\bibliographystyle{plain}
\title{Structured matrices and inverses \thanks{This work has been carried out
while the author was visiting the Institute for Mathematics and its
Applications, University of Minnesota, Minneapolis, in April 1992.}} 
\author{P. Comon\thanks{Thomson-Sintra, Sophia Antipolis BP138, 06561 Valbonne
Cedex France. The author can be reached at \mbox{na.comon@na-net.ornl.edu}.}}

\maketitle

\begin{abstract}
A matrix (and any associated linear system) will be referred to as structured
if it has a small displacement rank. It is known that the inverse of a
structured matrix is structured, which allows fast inversion (or solution),
and reduced storage requirements. According to two definitions of displacement
structure of practical interest, it is shown here that several types of
inverses are also structured, including  the Moore-Penrose inverse of
rank-deficient matrices.
\end{abstract}

\begin{keywords}
Displacement rank, Structured matrix, T\"oplitz, Hankel,
Inverse, Schur, Moore-Penrose, Pseudo-inverse, Deconvolution.
\end{keywords}

\begin{AMSMOS}
15A03, 15A06, 15A09, 15A57, 65F20, 65F30.
\end{AMSMOS}
\bigskip

%%%%%%%%%%%%%%%%%%%%%%%%%%%%%%%%%%%%%%%%%%%%%%%%%%%%%%%%%%%%%%%%%%%%%%%%%%%%%%
\section{Introduction}  
Close to T\"oplitz or close to Hankel matrices appear in various areas
including signal
processing and automatic control (e.g. prediction of second-order
nearly stationary time series).
In radar or sonar (or more generally antenna processing), T\"oplitz matrices
are encountered when far-field sources impinge an array of regularly spaced
sensors after propagating through an homogeneous medium. If 2-dimensional
regular arrays are utilized, then block-T\"oplitz matrices can be found. 
Other applications include optics, image processing (when the spreading
function is shift invariant), differential or integral equations under certain
boundary conditions and for certain discretizations (e.g. oil prospecting),
seismics, geophysics, transmission lines, and communications$\dots$ In
general, these applications correspond to the solution of some inverse
problems.  When shift invariance properties are satisfied, the linear operator
to invert is T\"oplitz, or block-T\"oplitz, and it is dealt with a
deconvolution problem.

However, T\"oplitz matrices in the strict sense are rarely encountered in the
real word, because the abovementioned invariance properties are not satisfied.
For instance, second-order stationarity of long time series, or homogeneity of
propagation media, are idealized assumptions.  In antenna array processing,
the decalibration of the array is the main cause of many problems, among
which the deviation from T\"oplitz is one of the mildest ones. For instance
in sonar, decalibration occurs because of the effects of pressure,
temperature, and usage, among others.  Another major cause of problems is the
distorsion of wave fronts impinging the array due to inhomogeneity of the
medium or to local turbulences (note that improvements can be obtained by
assuming that the celerity is random with a small variance, but this is out
of the scope of the present discussion).
Lastly, a simple deviation from T\"oplitz that has been already studied is the
effect of limited length of the data.  The proximity to T\"oplitz then depends
on the way the matrix is calculated:
its displacement rank ranges from 2 to 4 under ideal assumptions.

Since the set of T\"oplitz (or Hankel) matrices is a linear space,
it is easy to compute
the closest T\"oplitz approximate of any matrix by a simple projection.
However, this operation should be avoided in general, since it would destroy
other important structures (e.g.~just the rank).
On the other hand, finding the best approximate of a matrix by another of
given rank and given displacement rank is still an open problem. It is true
that some simple iterative algorithms have already been proposed in the
literature for the T\"oplitz case, but the convergence issue has not been
completely covered.

Since the early works by Schur (1917), Levinson (1947), Durbin (1960), Trench
(1964), and Bareiss (1969), a lot of work has been done. In particular,
Kailath and others introduced in the seventies the concept of displacement
rank, which allows in some way to measure a distance to T\"oplitz
\cite{KVM78}. By the way, the concept of displacement rank may be seen to have
some connections with integral and differential equations \cite{K91}. An
excellent survey of related works can be found in \cite{K87}.  Other
recent investigations are also reported in \cite{CK90}.
\par\medskip
It is known that a linear system $Tx=b$ can be solved with $O(n^2)$ flops if
$T$ is a $n \times n$ T\"oplitz matrix. If $T$ is just only close to
T\"oplitz, it is useful to define a displacement rank, $\delta$, measuring a
distance to the T\"oplitz structure \cite{FMKL79}. Then it has been shown that
the solution requires only $O(\delta n^2)$ flops, to be compared to the
$O(n^3)$ complexity necessary to solve a dense linear system of general form.
More recently, superfast algorithms have been proposed to solve T\"oplitz
systems, and their complexity ranges from $O(n log^2n)$ to $O(\alpha n log n)$,
$\alpha < n$, \cite{AG88} \cite{BBD86} \cite{CS89}. \par
The displacement rank of a linear system is clearly related to the complexity
of its solution. It has been shown in \cite{C89} \cite{CKL87} that this
complexity reduction also holds for the calculation of various
factorizations, provided the Schur algorithm is run on the appropriate
block-matrix. In this paper, the displacement rank will be defined in a
slightly more general framework, such that the structure of a wider class of
matrices can be taken into account. In this framework, the first step in the
quest of fast algorithms is to check whether the system considered has a
displacement structure, and under what displacement operator its displacement
rank is the smallest. Building explicitly fast algorithms taking advantage of
this structure is the next question. However, our investigations are limited
in this paper to the study of the displacement rank itself, and it will not
be discussed how to build the corresponding fast algorithm.
\par
The paper is organized as follows.
Definitions and basic properties are given in section 2.
In section 3, the structure of inverses and products of full-rank structured
matrices is analyzed.
Section 5 is devoted to the study of structured rank-deficient matrices, and
utilizes preliminary results derived in section 4.

\medskip
%%%%%%%%%%%%%%%%%%%%%%%%%%%%%%%%%%%%%%%%%%%%%%%%%%%%%%%%%%%%%%%%%%%%%%%%%%%%%%
\section{Definitions and first properties} 
The structure that will be considered in this paper is exclusively the
{\em displacement structure} \cite{KVM78} \cite{FMKL79}.
Roughly speaking, a structured matrix is the sum of displaced versions of a
unique generating matrix of small rank. For instance, sparse matrices may not
have any interesting displacement structure. Displacement operators can be
defined in different ways, and two definitions will be used subsequently.
\espace1
\begin{definition}\label{def:Nabla}
For any fixed pair of matrices $(Z,N)$ of appropriate dimension, the
displacement of a matrix A with respect to displacement operator
$\nabla_{Z,N}$ is defined as
 \begin{equation}\label{def:NablaZN}
 \nabla_{Z,N} A=A-ZAN.
 \end{equation}
\end{definition}

\begin{definition}\label{def:Delta}
For any fixed pair of matrices $(Z,N)$ of appropriate dimension, the
displacement of a matrix A with respect to displacement operator
$\Delta_{Z,N}$ is defined as
 \begin{equation}\label{def:DeltaZN}
 \Delta_{Z,N} A=ZA-AN.
 \end{equation}
\end{definition}

\espace1
In the remaining, matrices $Z$ and $N$ will be referred to as {\em
displacement matrices}, and the pair $\{,Z,N\}$ to as the {\em displacement
pattern}. . Once the above definitions are assumed in the primal space, then
it is convenient to use the definitions below in the dual space, denoting by
$({}^*)$ the transposition:
\begin{eqnarray}\nonumber
 \nabla_{N,Z} (A^*) = A^* - N A^* Z,\label{def:NablaNZ} \\
 \Delta_{N,Z} (A^*) = N A^* - A^* Z.\label{def:DeltaNZ}
\end{eqnarray}
\espace1
\begin{definition}
Matrices for which any of the four displaced matrices (\ref{def:NablaZN}),
(\ref{def:DeltaZN}), (\ref{def:NablaNZ}) or (\ref{def:DeltaNZ}) has a rank
bounded by a value that does not depend on the size of $A$ will be referred to
as structured. This rank will be called the displacement rank of $A$ with
respect to the displacement operator considered, and will be denoted as
$\delta^\nabla_{Z,N}\{A\}$, $\delta^\nabla_{N,Z}\{A^*\}$,
$\delta^\Delta_{Z,N}\{A\}$, or $\delta^\Delta_{N,Z}\{A^*\}$.
\end{definition}

\espace1
This definition is consistent with \cite{CK90}. Displacement matrices $Z$ and
$N$ are usually very simple (typically formed only  of ones and zeros).
Additionally, it can be seen that the displacement operator 
(\ref{def:NablaZN}) is easily invertible as soon as {\em either $Z$ or $N$}
is nilpotent. To see this, assume that $Z^{k+1}=0$ and explicit the
displacement $\nabla_{Z,N}$ in the sum
$$
 \sum_{i=0}^k \nabla_{Z,N}\{Z^i A N^i\}.
$$
Then this expression can be seen to be nothing else than $A$ itself. For
additional details, see \cite{K87} and references therein. Note that the
results shown in this paper will not require a particular form for matrices
$Z$ and $N$ (nilpotent for instance), unless otherwise specified. Other
considerations on invertibility of displacement operators are also tackled in
\cite{C89}. In \cite{W92}, displacement operators are defined (in a manner
very similar to \cite{C89}), but displacement ranks of products or
pseudo-inverses are unfortunately not obtained explicitely. Lastly, other
displacement structures, including (\ref{def:DeltaZN}), are being
investigated by G.Heinig.

\espace1
\begin{example}\rm \label{ex:T-and-H}
Denote $S$ the so-called lower shift matrix:
\renewcommand{\arraystretch}{0.2}
\begin{equation}\label{def:S}
S = \left( \begin{array}{cccc}
 0 &  &  &  \\
 1 &\ddots & & \\
 &\ddots & \ddots & \\
 & &1 &0
\end{array}
\right).
\end{equation}\renewcommand{\arraystretch}{1}
For Hankel matrices, it is easy to check out that we have
\begin{eqnarray}
\dnZN\{H\}\leq 2, \quad \mbox{ for $(Z,N) = (S,S),$} \\
\ddZN\{H\}\leq 2, \quad \mbox{ for $(Z,N) = (S,S^*),$}
\end{eqnarray}
whereas for T\"oplitz matrices, we have
\begin{eqnarray}
\dnZN\{T\}\leq 2, \quad \mbox{ for $(Z,N) = (S,S^*),$} \\
\ddZN\{T\}\leq 2,\quad \mbox{ for $(Z,N)=(S,S).$}
\end{eqnarray}

In these particular cases, the non-zero entries of displaced matrices are
indeed contained only in one row and one column.
These four statements hold also true if matrices Z and N are permuted.
In other words,
$$
\delta^{\Delta}_{S^*,S}\{H\} \leq 2, \mbox{~and~}
\delta^{\nabla}_{S^*,S}\{T\} \leq 2.
$$
\end{example}

It turns out that the definitions \ref{def:Nabla} and \ref{def:Delta} yield
displacement ranks that are not independent to each other. We have indeed the
following

\espace1
\begin{theorem}\label{th:linkDN}
For any given matrices $Z, N, A,$ the two inequalities below hold
 \begin{eqnarray}
 \dnZN\{A\} \leq \delta^\Delta_{Z^*,N}\{A\} + \delta^\nabla_{Z,Z^*}\{I\}, \\
 \ddZN\{A\} \leq \delta^\nabla_{Z^*,N}\{A\} + \delta^\nabla_{Z,Z^*}\{I\},
 \end{eqnarray}
where $I$ denotes the identity matrix having same dimensions as $A$.
\end{theorem}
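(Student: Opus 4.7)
The plan is to prove each inequality by an algebraic identity that expresses one displacement of $A$ as a sum of two matrices: one involving the ``other'' displacement of $A$, and one involving $\nabla_{Z,Z^*}I = I - ZZ^*$. Rank subadditivity then finishes the job.

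For the first inequality, start from $\Delta_{Z^*,N}A = Z^*A - AN$ and solve for $AN$, obtaining $AN = Z^*A - \Delta_{Z^*,N}A$. Substitute into $\nabla_{Z,N}A = A - ZAN$ to get
\begin{equation}\nonumber
\nabla_{Z,N}A \;=\; A - Z(Z^*A - \Delta_{Z^*,N}A) \;=\; (I - ZZ^*)A + Z\,\Delta_{Z^*,N}A \;=\; (\nabla_{Z,Z^*}I)\,A + Z\,\Delta_{Z^*,N}A.
\end{equation}
Since multiplication by $Z$ or by $A$ cannot increase the rank, subadditivity of rank yields the desired bound.

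For the second inequality, premultiply $\nabla_{Z^*,N}A = A - Z^*AN$ by $Z$, obtaining
\begin{equation}\nonumber
Z\,\nabla_{Z^*,N}A \;=\; ZA - ZZ^*AN \;=\; ZA - AN + (I - ZZ^*)AN \;=\; \Delta_{Z,N}A + (\nabla_{Z,Z^*}I)\,AN,
\end{equation}
so that $\Delta_{Z,N}A = Z\,\nabla_{Z^*,N}A - (\nabla_{Z,Z^*}I)\,AN$. Rank subadditivity again gives the bound.

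No step should be a serious obstacle here: everything is a one-line manipulation once one recognizes that the ``difference'' between $\nabla$ and $\Delta$ type displacements, after swapping $Z$ for $Z^*$, is exactly governed by $I - ZZ^*$. The only thing to check carefully is that the conformability of the products is fine (so that $Z^*A$, $AN$, $ZZ^*$, etc.\ are all defined), which follows from the standing assumption that $Z$ and $N$ have ``appropriate dimension'' relative to $A$ in Definitions~\ref{def:Nabla} and~\ref{def:Delta}. Note the identity matrix in $\nabla_{Z,Z^*}I$ has the number of rows of $A$, consistent with the statement of the theorem.
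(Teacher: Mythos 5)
Your proof is correct and uses exactly the same two algebraic identities as the paper: $\nabla_{Z,N}A=(I-ZZ^*)A+Z\,\Delta_{Z^*,N}A$ and $\Delta_{Z,N}A=Z\,\nabla_{Z^*,N}A-(I-ZZ^*)AN$, followed by rank subadditivity. Nothing further is needed.
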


\espace1
\begin{proof}
 $\nabla_{Z,N}A=Z(Z^*A-AN)+(I-ZZ^*)A$ shows the first inequality, and
 $\Delta_{Z,N}A=Z(A-Z^*AN)-(I-ZZ^*)AN$ shows the second one.
\end{proof}

\espace1
\begin{example}\rm
If $A$ is a circulant T\"oplitz matrix, e.g.,
$$
A = \left( \begin{array}{cccc}
 a & b & c & d \\
 d & a & b & c \\
 c & d & a & b
\end{array}
\right),
$$
then it admits a displacement rank $\dnZN\{A\}=1$ provided the following
displacement pattern is assumed:
$ Z = S_3$ as given by (\ref{def:S}), and 
$$
N = \left( \begin{array}{cccc}
 0 & 1 & 0 & 0 \\
 0 & 0 & 1 & 0 \\
 0 & 0 & 0 & 1 \\
 1 & 0 & 0 & 0
\end{array}
\right).
$$
In this example, we also have $\delta^{\Delta}_{Z^*,N}\{A\} = 1$, which is
conform to theorem \ref{th:linkDN}.
\end{example}

\espace1\begin{example}\rm
Let $A$ be a $m \times n$ T\"oplitz matrix. Define $N=S_n$, and
$$Z = \left( \begin{array}{cccc}
 0 & 1 & 0 & 0 \\
 0 & 0 & \ddots & 0 \\
 1 & 0 & 0 & 1 \\
 0 & 0 & 0 & 0
\end{array} \right). $$
Then it can be seen that $\delta^\Delta_{Z^*,N}\{A\}=2$, and
$\delta^\nabla_{Z,N}\{A\}=3$. This example shows that equality can occur
in theorem \ref{th:linkDN}.
\end{example}

\espace1\begin{example}\rm
If $T$ is T\"oplitz $m \times n$ and $H$ is Hankel $m \times p$, then the
block matrix $(T~H)$ has a displacement rank equal to 3 with respect to the
displacement pattern $\{Z,N\}=\{S_m, S^*_n \oplus S_p\}$.
\end{example}
\par\espace1
The notation $A \oplus B$ will be subsequently used when $A$ and $B$ are
square to denote the block-diagonal matrix having $A$ and $B$ as diagonal
blocks.

\medskip
%%%%%%%%%%%%%%%%%%%%%%%%%%%%%%%%%%%%%%%%%%%%%%%%%%%%%%%%%%%%%%%%%%%%%%%%%%%%%%
\section{Displacement of various inverses and products}\label{sec:various}
There is a number of situations where the displacement rank of a matrix
can be quite easily shown to be small.
Since our main concern is inverses, let us start with the simplest case.

\espace1\begin{theorem}\label{th:inverse}
Let A be an invertible square matrix. Then
 \begin{equation}\label{th:rd-inverse}
 (i)~\ddZN\{A\} = \ddNZ\{A^{-1}\},~and~(ii)~ \dnZN\{A\} = \dnNZ\{A^{-1}\}.
 \end{equation}
In other words, $A$ and $A^{-1}$ have the same displacement rank with respect
to dual displacement patterns.
\end{theorem}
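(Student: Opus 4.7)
My plan is to handle the two parts separately, with (i) being almost immediate and (ii) requiring one additional ingredient.

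For part (i), I would look for a similarity-style identity that carries $\Delta_{Z,N}A$ into $\Delta_{N,Z}A^{-1}$. The natural candidate is sandwiching by $A^{-1}$ on both sides: a direct expansion of $A^{-1}(ZA-AN)A^{-1}$ gives $A^{-1}Z-NA^{-1}$, i.e. $-\Delta_{N,Z}A^{-1}$. Since $A^{-1}$ is invertible, left- and right-multiplication by $A^{-1}$ preserves rank, and equality of displacement ranks follows in one line. No obstacle here.

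For part (ii), the same conjugation trick does \emph{not} work: computing $A^{-1}(A-ZAN)A^{-1}$ yields $A^{-1}-A^{-1}ZANA^{-1}$, which is not $\nabla_{N,Z}A^{-1}$. So my plan is to reduce both displacements to a rank of the same algebraic shape. Set $M:=A^{-1}ZA$. Left-multiplying $\nabla_{Z,N}A=A-ZAN$ by $A^{-1}$ gives $I-MN$, so $\delta^\nabla_{Z,N}\{A\}=\operatorname{rank}(I-MN)$. Right-multiplying $\nabla_{N,Z}A^{-1}=A^{-1}-NA^{-1}Z$ by $A$ gives $I-NA^{-1}ZA=I-NM$, so $\delta^\nabla_{N,Z}\{A^{-1}\}=\operatorname{rank}(I-NM)$.

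The proof then reduces to the rank identity $\operatorname{rank}(I-MN)=\operatorname{rank}(I-NM)$ for square matrices of the same size. This is the cousin of Sylvester's determinant identity $\det(I-MN)=\det(I-NM)$ and is classical; I would either cite it or give a short self-contained argument using the block factorization
\[
\begin{pmatrix} I & M \\ N & I \end{pmatrix}
\]
which admits two Schur complements $I-MN$ and $I-NM$ of equal rank (obtained by reducing once from the top-left and once from the bottom-right).

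The main obstacle, then, is purely notational: recognizing that conjugation by $A$ is asymmetric between $\nabla_{Z,N}$ and $\nabla_{N,Z}$, and spotting that the right symmetrizing substitution is $M=A^{-1}ZA$ so that the two displacements become $I-MN$ and $I-NM$. Once that is seen, invoking the rank lemma closes the argument.
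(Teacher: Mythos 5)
Your proof is correct and follows essentially the same route as the paper: part (i) by observing that the two displaced matrices coincide up to invertible factors and a sign, and part (ii) by reducing both displacement ranks to matrices of the form $I-XY$ and $I-YX$ and invoking $rank(I-XY)=rank(I-YX)$. The only divergence is how that rank identity is justified: the paper derives it from a lemma stating that the eigenspaces of $f\circ g$ and $g\circ f$ at a nonzero eigenvalue (here $\lambda=1$) have equal dimension, whereas you obtain it from the two Schur complements of $\left(\begin{array}{cc} I & M \\ N & I\end{array}\right)$ --- both arguments are classical and equally valid here since $M$ and $N$ are square of the same size.
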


\par To prove the theorem, it it useful to recall the following lemma.
\espace1\begin{lemma}\label{le:ranks}
Let $f$ and $g$ be two linear operators, and denote $E_\lambda^h$ the
eigenspace of operator $h$ associated with the eigenvalue $\lambda$. If
$\lambda$ is an eigenvalue of $fog$, then it is also an eigenvalue of $gof$.
In addition, the eigenspaces have the same dimension as soon as $\lambda$ is
non-zero:
$$ dim\{E_\lambda^{fog}\} = dim\{E_\lambda^{gof}\}. $$
\end{lemma}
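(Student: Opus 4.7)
The plan is to exhibit an explicit linear map between the two eigenspaces and show that, for nonzero $\lambda$, it is an isomorphism. The natural candidate is simply the restriction of $g$ itself to $E_\lambda^{fog}$.

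First I would verify that $g$ sends $E_\lambda^{fog}$ into $E_\lambda^{gof}$: for any $v\in E_\lambda^{fog}$, apply $g$ on the left of the eigenvalue equation $fg(v)=\lambda v$ to obtain $gf(g(v))=g(\lambda v)=\lambda g(v)$, so $g(v)$ lies in $E_\lambda^{gof}$. Next I would observe that, when $\lambda\neq 0$ and $v\neq 0$, the vector $g(v)$ cannot vanish, because $g(v)=0$ would yield $fg(v)=0$, hence $\lambda v=0$ and therefore $v=0$, a contradiction. This simultaneously proves the first assertion (a nonzero eigenvector of $fog$ produces a nonzero eigenvector of $gof$ for the same eigenvalue) and shows that the restriction $g|_{E_\lambda^{fog}}:E_\lambda^{fog}\to E_\lambda^{gof}$ is injective. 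Hence $\dim E_\lambda^{fog}\leq \dim E_\lambda^{gof}$. Swapping the roles of $f$ and $g$ and repeating the argument gives the reverse inequality, so equality holds.

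The only subtlety is the case $\lambda=0$, where the above map $g|_{E_0^{fog}}$ need no longer be injective and the dimensions of the two kernels can genuinely differ when $f$ and $g$ act between spaces of different dimensions; but this case is explicitly excluded from the dimension equality in the lemma, and the first assertion for $\lambda=0$ (that $0$ is still an eigenvalue of $gof$) follows from a standard rank argument whenever $fog$ and $gof$ are endomorphisms of finite-dimensional spaces of equal dimension, which is the setting in which the lemma will subsequently be applied (to $fog$ and $gof$ obtained from the factorizations appearing in Theorem~\ref{th:inverse}).

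I do not anticipate any serious obstacle: the whole argument is the classical trick of using $g$ (resp.\ $f$) to move eigenvectors across a product, with nonvanishing of $\lambda$ ensuring that no information is lost. The only care needed is to keep the domains and codomains of $f$ and $g$ straight so that the compositions $fog$ and $gof$ are well defined and the symmetry argument is legitimate.
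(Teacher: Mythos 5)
Your proof is correct and follows essentially the same route as the paper: compose the eigenvalue equation with $g$ to transport eigenvectors from $E_\lambda^{fog}$ to $E_\lambda^{gof}$, observe that $g(v)\neq 0$ when $\lambda\neq 0$, and conclude equality of dimensions by symmetry (the paper phrases this last step as a contradiction argument, but it is the same injectivity observation). Your explicit handling of the $\lambda=0$ case is in fact slightly more careful than the paper's.
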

\begin{proof}
Assume $\lambda$ is an eigenvalue of $fog$. Then for some non-zero vector $x$,
$fog(x) = \lambda x$. Composing both sides by operator $g$ immediately shows
that
\begin{equation}\label{eq:gof}
gof(g(x)) = \lambda g(x).
\end{equation}
Next there are two cases: (i)~if $g(x) \ne 0$,
then $g(x)$ is an eigenvector of $gof$ associated with the same eigenvalue
$\lambda$; (ii)~if $g(x) = 0$, then $fog(x) = 0$ and necessarily $\lambda = 0$.
Now assume without restricting the generality of the proof that
$dim\{E_\lambda^{fog}\} > dim\{E_\lambda^{gof}\}$. Then there exists a vector
$x$ in $E_\lambda^{fog}$ such that $g(x) = 0$ (since otherwise 
relation (\ref{eq:gof}) would imply that g(x) is also in $E_\lambda^{gof}$).
Yet composing by $f$ yields $fog(x) = 0$ and consequently $\lambda = 0$.
As a conclusion, if $\lambda \ne 0$, eigenspaces must have the same dimension.
\end{proof}

\espace1\begin{proof}{\em \hspace{-0.5em} of theorem.}
We have by definition $\ddZN\{A\} = rank\{ZA - AN\} = rank\{Z - ANA^{-1}\}$,
and $\ddNZ\{A^{-1}\} = rank\{NA^{-1} - A^{-1}Z\} = rank\{ANA^{-1} - Z\}$. But
these two matrices are opposite, and therefore have the same rank. This
proves (i).
\par Similarly since the rank does not change
by multiplication by a regular matrix, we have
$\dnZN\{A\} = rank\{A - ZAN\} = rank\{I - ZANA^{-1}\}$.
On the other hand $\dnNZ\{A^{-1}\} = rank\{A^{-1} - NA^{-1}Z\} =
rank\{I - NA^{-1}ZA\}$. Now from lemma \ref{le:ranks} we know that
$ker\{I-fog\}$ and $ker\{I-gof\}$ have the same dimension. If $f$ and $g$
are endomorphisms in the same space, this implies in particular that
$rank\{I-fog\} = rank\{I-gof\}$. Now applying this result to
$f$ = $ZA$, $g$ = $NA^{-1}$ eventually proves (ii).
\end{proof}

The proof that an invertible matrix and its inverse have the same displacement
rank has been known for a long time, and proved for symmetric matrices
\cite{K87}. However, the proof for general T\"oplitz matrices seems to have
been given only recently in \cite{CK90} for a displacement of type
(\ref{def:NablaZN}). Our theorem is slightly more general.

%%% REGULARIZED INVERSE
\espace1\begin{corollary}
For any given square matrix $A$, let the regularized inverse be given by
$R = (A + \eta I)^{-1}$,
for some number $\eta$ such that $A+\eta I$ is regular.
Then the displacement ranks of A and R are linked by the inequality below
\begin{equation}\label{cor:reg-inverse}
\delta_{N,Z}\{R\} \leq \delta_{Z,N}\{A\} + \delta_{Z,N}\{I\},
\end{equation}
this inequality holding for both displacements $\nabla$ and $\Delta$.
\end{corollary}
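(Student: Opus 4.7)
My plan is to reduce the statement to Theorem~\ref{th:inverse} by exploiting the linearity of the displacement operators in their matrix argument. Because $A+\eta I$ is invertible by hypothesis, I would first apply Theorem~\ref{th:inverse} to $A+\eta I$ (rather than to $A$ itself) to obtain the equality $\delta_{N,Z}\{R\} = \delta_{Z,N}\{A+\eta I\}$, separately for the operators $\nabla$ and $\Delta$. The problem then becomes that of bounding $\delta_{Z,N}\{A+\eta I\}$ from above.

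Next I would observe that, for fixed displacement matrices $(Z,N)$, both $\nabla_{Z,N}$ and $\Delta_{Z,N}$ are linear maps on the space of matrices: directly from definitions~(\ref{def:NablaZN}) and~(\ref{def:DeltaZN}) one has, for either operator,
\[
\mathrm{op}_{Z,N}(A+\eta I) \;=\; \mathrm{op}_{Z,N}(A) \;+\; \eta\,\mathrm{op}_{Z,N}(I).
\]
Subadditivity of rank, together with the fact that multiplication by a nonzero scalar preserves rank, then yields $\delta_{Z,N}\{A+\eta I\} \leq \delta_{Z,N}\{A\} + \delta_{Z,N}\{I\}$; combined with the first step, this gives the claimed inequality~(\ref{cor:reg-inverse}).

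I do not expect any real obstacle here. Once Theorem~\ref{th:inverse} is in hand the argument is essentially a one-liner, the role of $\eta$ being cosmetic since displacement rank is invariant under nonzero scalar multiplication. The only mildly delicate point is the degenerate case $\eta=0$, which is subsumed by Theorem~\ref{th:inverse} itself (and in fact gives equality, a strictly sharper bound). The same argument would prove the slightly more general statement in which $\eta I$ is replaced by any matrix $B$ making $A+B$ invertible, at the cost of replacing $\delta_{Z,N}\{I\}$ by $\delta_{Z,N}\{B\}$.
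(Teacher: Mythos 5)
Your proposal is correct and follows essentially the same route as the paper: the paper's proof also writes $\delta_{N,Z}\{R\}=\delta_{Z,N}\{R^{-1}\}=\delta_{Z,N}\{A+\eta I\}$ via Theorem~\ref{th:inverse} and then invokes subadditivity of rank on the sum $A+\eta I$. Your added remarks on the $\eta=0$ case and the generalization to $A+B$ are sound but not needed.
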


\espace1\begin{proof}
Just write  $\dnNZ\{R\}=\dnZN\{R^{-1}\}=\dnZN\{A+\eta I\}$, and since the
rank of a sum is smaller than the sum of the ranks, we eventually
obtain the theorem. In order to prove the inequality for the displacement
$\Delta$, proceed exaclty the same way.
\end{proof}

\espace1
When close to T\"oplitz or close to Hankel matrices are considered,
the displacement matrices $Z$ and $N$ are essentially either the
lower shift matrix $S$ or its transposed. In such a case, it is
useful to notice that
\begin{equation}\label{eq:NablaSS*_I}
\delta^\nabla_{S,S^*}\{I\} = \delta^\nabla_{S^*,S}\{I\} = 1.
\end{equation}
On the other hand for any matrix $Z$ (and $S$ or $S^*$ in particular):
\begin{equation}\label{eq:DeltaSS_I}
\delta^\Delta_{Z,Z}\{I\} = 0.
\end{equation}

\espace1
For a T\"oplitz matrix $T$, we have a stronger (and obvious) result, because
$T$ and $T+\eta I$ are both T\"oplitz.
$$
\delta^\nabla_{S^*,S}\{R\} = \delta^\nabla_{S,S^*}\{T\}.
$$

%%% SCHUR COMPLEMENT
\espace1\begin{corollary}\label{cor:schur}
Let M be the $2 \times 2$ block matrix below
$$
M=\left( \begin{array}{cc}
A & B \\ C & D \end{array} \right),
$$
where A and D are square of dimension $n_1$ and $n_2$, respectively.
Assume $M$ and $A$ are invertible. When the last $n_2 \times n_2$ block of
the matrix $M^{-1}$ is invertible, it can be written as $\bar{A}^{-1}$, 
where $\bar{A}$ is the so-called Schur complement of $A$ in $M$:
$\bar{A} = D - CA^{-1}B$. If $M$ has a
displacement rank $\delta_{N,Z}\{M\}$ with respect to a 
displacement pattern $\{Z,N\}=\{Z_1 \oplus Z_2, N_1 \oplus N_2\}$, where
$Z_i$ and $N_i$ are $n_i \times n_i$ matrices, then the displacement
rank of $\bar{A}$ satisfies the inequality below for both displacements
$\nabla$ and $\Delta$:
\begin{equation}\label{eq:Schurcompl}
 \delta_{Z_2,N_2}\{\bar{A}\} \leq \delta_{Z,N}\{M\}.
\end{equation}
\end{corollary}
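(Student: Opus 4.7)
The plan is to apply Theorem~\ref{th:inverse} twice, once to $M$ and once to $\bar A$, and to exploit the block-diagonal structure of $Z$ and $N$ in between. First I would write $M^{-1}$ as a $2\times 2$ block matrix whose lower-right $n_2\times n_2$ block is $\bar A^{-1}$; this identity is given as a hypothesis of the corollary and is the standard Schur-complement formula. The goal is then to relate the displacement of this particular block to the displacement of the whole matrix $M^{-1}$.

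The key observation is that, because $Z=Z_1\oplus Z_2$ and $N=N_1\oplus N_2$ are block-diagonal with block sizes matching the partition of $M^{-1}$, both displacement operators act block-wise. In particular, the lower-right $n_2\times n_2$ block of $\nabla_{N,Z}M^{-1}=M^{-1}-NM^{-1}Z$ equals $\bar A^{-1}-N_2\bar A^{-1}Z_2=\nabla_{N_2,Z_2}\bar A^{-1}$, and likewise the lower-right block of $\Delta_{N,Z}M^{-1}$ equals $\Delta_{N_2,Z_2}\bar A^{-1}$. Since the rank of any submatrix is bounded by the rank of the full matrix, this yields
$$
\delta_{N_2,Z_2}\{\bar A^{-1}\}\;\leq\;\delta_{N,Z}\{M^{-1}\}
$$
for both the $\nabla$ and $\Delta$ variants.

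To finish, I invoke Theorem~\ref{th:inverse} on both sides: applied to $M$ it gives $\delta_{N,Z}\{M^{-1}\}=\delta_{Z,N}\{M\}$, and applied to $\bar A$ it gives $\delta_{Z_2,N_2}\{\bar A\}=\delta_{N_2,Z_2}\{\bar A^{-1}\}$. Chaining these two equalities with the submatrix inequality produces the claimed bound (\ref{eq:Schurcompl}).

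No step here is genuinely hard; the only point that needs care is the block-diagonal bookkeeping that lets the displacement operator commute with extracting the lower-right block. If $Z$ or $N$ were not block-diagonal with respect to the $(n_1,n_2)$ partition, the argument would break, so the block-diagonal hypothesis on the displacement pattern is doing all the real work.
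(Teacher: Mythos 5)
Your proof is correct and follows essentially the same route as the paper: apply Theorem~\ref{th:inverse} to both $M$ and $\bar A$, and use the fact that the rank of a submatrix (here the lower-right block of the displaced $M^{-1}$, which by block-diagonality of $Z$ and $N$ is exactly $\nabla_{N_2,Z_2}\bar A^{-1}$ or $\Delta_{N_2,Z_2}\bar A^{-1}$) is bounded by the rank of the full matrix. You actually spell out the block-diagonal bookkeeping more explicitly than the paper does, which is a welcome addition.
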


\espace1\begin{proof}
Applying twice the theorem \ref{th:inverse}, and noting that the rank of $M$
is always larger than the rank of any of its submatrices, yield 
$\delta_{Z_2,N_2}\{\bar{A}\}=\delta_{N_2,Z_2}\{\bar{A}^{-1}\} \leq$
$\delta_{N,Z}\{M^{-1}\}=\delta_{Z,N}\{M\}$.
\end{proof}
This kind of property has been noticed for several years by Chun and
Kailath. See for instance \cite{C89} \cite{CK90}. This corollary restates
it in the appropriate framework.

%%% PRODUCT OF MATRICES
\espace1\begin{theorem}\label{th:product}
Let $A_1$ and $A_2$ be two full-rank matrices of size $n_1 \times n_2$ and
$n_2 \times n_1$, respectively, with $n_1 \leq n_2$. Then the displacement
rank of the matrix $A_1A_2$ is related to the displacement ranks of $A_1$ and
$A_2$ for either displacement $\nabla$ or $\Delta$ by
\begin{equation}
 \delta_{Z_1,Z_2}\{A_1A_2\} \leq \delta_{Z_1,N_1}\{A_1\}
 +\delta_{N_1,N_2}\{I_{n_2}\} + \delta_{N_2,Z_2}\{A_2\},
\end{equation}
\end{theorem}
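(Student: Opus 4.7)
The plan is to find, for each of the two displacement operators $\nabla$ and $\Delta$, an algebraic identity expressing the displacement of $A_1 A_2$ as a sum of three matrices, each of which factors through exactly one of the three displacements appearing on the right-hand side. Since $\mathrm{rank}(MN)\leq\mathrm{rank}(M)$ and $\mathrm{rank}(MN)\leq\mathrm{rank}(N)$, each of the three pieces will then have rank at most the corresponding displacement rank; subadditivity of rank under matrix sums will finish the proof.

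For the $\nabla$ case, I would add and subtract the intermediate quantities $Z_1 A_1 N_1 A_2$ and $Z_1 A_1 N_1 N_2 A_2 Z_2$ in order to telescope $A_1 A_2 - Z_1 A_1 A_2 Z_2$. A short direct calculation then yields the identity
\begin{equation*}
\nabla_{Z_1,Z_2}(A_1 A_2) = \nabla_{Z_1,N_1}(A_1)\, A_2 \;+\; Z_1 A_1 N_1\, \nabla_{N_2,Z_2}(A_2) \;-\; Z_1 A_1\, \nabla_{N_1,N_2}(I_{n_2})\, A_2 Z_2 .
\end{equation*}
For the $\Delta$ case, a slightly cleaner Leibniz-like decomposition works:
\begin{equation*}
\Delta_{Z_1,Z_2}(A_1 A_2) = \Delta_{Z_1,N_1}(A_1)\, A_2 \;+\; A_1\, \Delta_{N_1,N_2}(I_{n_2})\, A_2 \;+\; A_1\, \Delta_{N_2,Z_2}(A_2) .
\end{equation*}
Both identities are verified by straightforward expansion, each introduced cross term cancelling against its twin. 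Applying the rank bound to every summand and summing gives the desired inequality in each case.

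The hardest step is guessing the correct telescoping, especially for $\nabla$, where a three-term combination with one sign change is needed so that both spurious products $Z_1 A_1 N_1 A_2$ and $Z_1 A_1 N_1 N_2 A_2 Z_2$ vanish. Once the identities are written down, their verification is routine. I would also point out that the argument uses neither the full-rank hypothesis nor the ordering $n_1 \leq n_2$: the dimensional constraint is only needed to ensure that the patterns are compatible (with $Z_1,Z_2$ of size $n_1$ and $N_1,N_2$ of size $n_2$) so that every matrix product appearing in the identities is well-posed. The full-rank assumption appears to be present for compatibility with later corollaries rather than for the inequality itself.
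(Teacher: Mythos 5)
Your proof is correct --- both identities check out by direct expansion, and the rank bounds (subadditivity under sums, submultiplicativity under products) then give the stated inequality --- but it takes a genuinely different route from the paper's proof of this theorem. The paper proves Theorem \ref{th:product} by embedding the problem in the $(n_1+n_2)\times(n_1+n_2)$ block matrix $M=\left(\begin{smallmatrix}I & A_2\\ A_1 & 0\end{smallmatrix}\right)$ with the block-diagonal pattern $\{N_2\oplus Z_1,\,N_1\oplus Z_2\}$, bounding $\delta\{M\}$ by the sum of the displacement ranks of its blocks, and then invoking Corollary \ref{cor:schur} (which rests on Theorem \ref{th:inverse} applied to $M$ and $M^{-1}$) to pass to the Schur complement $-A_1A_2$. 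That route genuinely needs $M$ invertible, i.e.\ $A_1A_2$ nonsingular, which is exactly why the full-rank and $n_1\le n_2$ hypotheses appear in the statement; your observation that these hypotheses are superfluous for the inequality itself is correct, and your direct telescoping argument proves the more general fact. Indeed, the paper itself later adopts essentially your $\Delta$-decomposition (equation (\ref{eq:DeltaAB}) in the proof of Theorem \ref{th:prod_rectang}) precisely to remove the full-rank restriction, remarking that a similar identity holds for $\nabla$; your $\nabla$-identity supplies the version the paper leaves implicit. What the paper's Schur-complement route buys is conceptual uniformity with Corollary \ref{cor:schur} and the generator-based algorithmic picture (the displaced block matrix with crosses, reused in the sharper T\"oplitz/Hankel corollary); what your route buys is elementarity and strictly greater generality.
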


\espace1\begin{proof}
To prove the theorem, form the square matrix $M$ of size $n_1+n_2$:
$$
M=\left( \begin{array}{cc}
I & A_2 \\ A_1 & 0 \end{array} \right),
$$
consider the displacement pattern $\{N_2 \oplus Z_1, N_1 \oplus Z_2\}$
and apply corollary \ref{cor:schur}. Again, since the displacement pattern is
block-diagonal, the displaced block matrix is formed of the displaced blocks.
\par In the present case, the Schur complement is precisely the product
$-A_1A_2$. This proof is identical to that already proposed in \cite{CK90} for
particular structured matrices.
\end{proof}
\espace1
Note that if $N_1=N_2$, (\ref{eq:DeltaSS_I}) implies
$\delta^\Delta_{N_1,N_2}\{I\}=0$. On the other hand, if $N_1=N_2^*=S$, then
$\delta^\nabla_{N_1,N_2}\{I\}=1$ from (\ref{eq:NablaSS*_I}).
For particular displacement matrices $Z$ and $N$, the general bounds given by
theorem \ref{th:product} may be too loose.
In particular for T\"oplitz or Hankel matrices, the corollary below is more
accurate.

\espace1\begin{corollary}
Let S be the lower shift matrix defined in (\ref{def:S}), $T_1$ and $T_2$ be
T\"oplitz matrices, and $H_1$ and $H_2$ be Hankel. Then under the conditions
of theorem \ref{th:product}:
\begin{eqnarray}
{\bf (a)} ~\delta^\Delta_{S,S}\{T_1T_2\} \leq 4, &
{\bf (b)} ~\delta^\Delta_{S,S}\{H_1H_2\} \leq 4, \label{eq:prodD} &
{\bf (c)} ~\delta^\Delta_{S,S^*}\{T_1H_2\} \leq 4, \\
{\bf (a)} ~\delta^\nabla_{S,S^*}\{T_1T_2\} \leq 4, &
{\bf (b)} ~\delta^\nabla_{S,S^*}\{H_1H_2\} \leq 4, \label{eq:prodN} &
{\bf (c)} ~\delta^\nabla_{S,S}\{T_1H_2\} \leq 4.
\end{eqnarray}
\end{corollary}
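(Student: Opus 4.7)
My plan is to bound each of the six displacement ranks directly via an algebraic splitting, rather than to apply theorem \ref{th:product} as a black box (which, as I explain below, loses a unit in the $\nabla$ cases). For any matrix $X$ of compatible size, adding and subtracting $Z_1 A_1 X A_2$ (resp.\ $A_1 X A_2$) gives the identities
\[
 A_1 A_2 - Z_1 A_1 A_2 Z_2 = (A_1 - Z_1 A_1 X)\,A_2 + Z_1 A_1\,(X A_2 - A_2 Z_2),
\]
\[
 Z_1 A_1 A_2 - A_1 A_2 Z_2 = (Z_1 A_1 - A_1 X)\,A_2 + A_1\,(X A_2 - A_2 Z_2).
\]
Since the rank of a product is bounded by the rank of either factor, these yield
\[
 \delta^\nabla_{Z_1,Z_2}\{A_1 A_2\} \leq \delta^\nabla_{Z_1,X}\{A_1\} + \delta^\Delta_{X,Z_2}\{A_2\}
\]
together with the analogous bound involving two $\Delta$-ranks on the right. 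The task is then reduced to choosing $X\in\{S,S^*\}$, in each of the six cases, so that both terms on the right are at most $2$.

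To make these choices I would first extend Example \ref{ex:T-and-H} to the full table of patterns $(Z,N)\in\{S,S^*\}^2$. The one-line calculation used there works verbatim in every case: the interior entries of the displaced matrix cancel whenever the pattern preserves the T\"oplitz index $i-j$ (or the Hankel index $i+j$), leaving nonzero entries only in one boundary row and one boundary column. The upshot is that a T\"oplitz matrix has $\nabla$-rank $\leq 2$ exactly for the patterns $(S,S^*)$ and $(S^*,S)$ and $\Delta$-rank $\leq 2$ exactly for $(S,S)$ and $(S^*,S^*)$, while a Hankel matrix shows the complementary behaviour, with $\nabla$-rank $\leq 2$ for $(S,S)$ and $(S^*,S^*)$ and $\Delta$-rank $\leq 2$ for $(S,S^*)$ and $(S^*,S)$.

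With this table the six inequalities follow by inspection. For the first $\nabla$-inequality I would take $X=S^*$, giving $\delta^\nabla_{S,S^*}\{T_1\}\leq 2$ and $\delta^\Delta_{S^*,S^*}\{T_2\}\leq 2$, hence the bound $4$. The remaining choices are $X=S$ for the $H_1H_2$ case and $X=S^*$ for the $T_1H_2$ case on the $\nabla$ line, and symmetrically $X=S$, $X=S^*$, $X=S$ for the three $\Delta$ cases. In each instance both contributions are bounded by $2$, so the total is $4$.

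The main obstacle is conceptual: theorem \ref{th:product} is not tight enough in the $\nabla$ cases, because its intermediate term $\delta^\nabla_{N_1,N_2}\{I\}$ cannot be driven to $0$ for any $N_1,N_2\in\{S,S^*\}$ (no product of two such shift matrices equals $I$), so a direct appeal to it would give only $\leq 5$. The single-$X$ splitting above bypasses this loss by routing through one auxiliary operator instead of two displacement matrices $(N_1,N_2)$, and is exactly what makes the bound $4$ attainable.
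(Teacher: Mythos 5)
Your argument is correct, and it takes a genuinely different route from the paper's. The paper proves the three $\Delta$-bounds by invoking theorem \ref{th:product} (i.e., the Schur-complement construction on a $2\times2$ block matrix) with displacement patterns chosen so that the middle term $\delta^\Delta_{N_1,N_2}\{I\}$ vanishes; for the three $\nabla$-bounds, where that theorem only yields $5$ (for exactly the reason you identify: $I-N_1N_2$ cannot vanish when $N_1,N_2\in\{S,S^*\}$ are singular), the paper instead re-examines the displaced block matrix $\nabla M$ and argues from its zero pattern that the nonzero entries sit in only two rows and two columns, hence rank at most $4$. You replace both steps by the single algebraic splitting
\[
 A_1A_2 - Z_1A_1A_2Z_2 = \bigl(A_1 - Z_1A_1X\bigr)A_2 + Z_1A_1\bigl(XA_2 - A_2Z_2\bigr),
\]
and its $\Delta$-analogue, which is essentially the proof technique of theorem \ref{th:prod_rectang} (stated later in the paper) specialized so that the intermediate identity term disappears by routing through one operator $X$ rather than a pair $(N_1,N_2)$; the mixed form, bounding a $\nabla$-rank of the product by a $\nabla$-rank of $A_1$ plus a $\Delta$-rank of $A_2$, is the key move that recovers the tight bound $4$ in the $\nabla$ cases without any pattern-of-zeros inspection. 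I verified the identities and all six choices of $X$ against the full $(Z,N)\in\{S,S^*\}^2$ table of displacement ranks for T\"oplitz and Hankel matrices (each entry follows by the same interior-cancellation computation as in example \ref{ex:T-and-H}), and each case indeed gives $2+2$. Your approach has the added advantages of treating the six cases uniformly and of not actually needing the full-rank hypothesis of theorem \ref{th:product}, since it never passes through an inverse or a Schur complement; the only cosmetic caveat is that your word ``exactly'' in describing the table should be read as ``in particular'' (a degenerate T\"oplitz matrix can of course have small displacement rank for other patterns too), but nothing in the proof depends on that direction.
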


\begin{proof}
Equations (\ref{eq:prodD}) result from a combination of example
\ref{ex:T-and-H} and theorem \ref{th:product}. In fact, take $Z_i=N_i=S$ for
{\bf (a)}, $Z_1=Z_2=N_1^*=N_2^*=S$ for {\bf (b)}, and $Z_1=N_1=N_2=Z_2^*=S$
for {\bf (c)}.\par
On the other hand, if we try to apply theorem \ref{th:product} to prove
(\ref{eq:prodN}), we find a result weaker than desired, for we obtain
$\delta^\nabla \leq 5$. A more careful analysis is therefore necessary.
Restart the proof of theorem \ref{th:product}: if $T_1$ and $T_2$ are full
rank T\"oplitz, the displaced block matrix $\nabla_{S \oplus S,
S^* \oplus S^*}M$ has the following form:
$$
\left( \begin{array}{cc}
\nabla I & \nabla T_2 \\
\nabla T_1 & 0
\end{array} \right) = 
\left( ~ \mbox{\footnotesize  \tabcolsep=3pt
\begin{tabular}{|c|cc|cc|} \hline
x & ~ & ~ & x & x \\ \cline{1-1} \cline{5-5}
\multicolumn{3}{|c|}{~} & x &\multicolumn{1}{|l|}{~} \\
\multicolumn{3}{|c|}{~} & x &\multicolumn{1}{|l|}{~} \\ \hline
\multicolumn{1}{|c}{x}& x & x & ~ & ~ \\ \cline{2-3}
x & ~ & ~ & ~ & ~ \\ \hline
\end{tabular}  } ~ \right),
$$
where crosses indicate the only locations where the matrix is allowed to have
non-zero entries: only in two rows and two columns. Such a matrix is clearly
of rank at most 4. Following the same lines as in theorem \ref{th:product}, it
can be seen that the product $T_1T_2$ has a displacement rank bounded by
4.\par A similar proof could be derived in the case of two Hankel matrices,
and will not be detailed here.
In order to prove (\ref{eq:prodN}{\bf c}), let us consider finally the
block matrix
$$
M=\left( \begin{array}{cc}
I & H_2 \\ T_1 & 0 \end{array} \right).
$$
Assuming the displacement pattern $\{S \oplus S, S^* \oplus S\}$,
the displaced matrix $\nabla M$ is now of the form
$$
\left( \begin{array}{cc}
\nabla I & \nabla H_2 \\
\nabla T_1 & 0
\end{array} \right) = 
\left( ~ \mbox{\footnotesize  \tabcolsep=3pt
\begin{tabular}{|c|cc|c|c|} \hline
x & ~ & ~ & x & x \\ \cline{1-1} \cline{4-5}
\multicolumn{3}{|c|}{~} & ~ & x \\
\multicolumn{3}{|c|}{~} & ~ & x \\ \hline
\multicolumn{1}{|c}{x}& x & x & \multicolumn{2}{c|}{~} \\ \cline{2-3}
x & ~ & ~ & \multicolumn{2}{c|}{~} \\ \hline
\end{tabular}  } \right),
$$
which is again obviously of rank at most 4. The last result follows.
\end{proof}

\par The theorem \ref{th:product} was valid only for full-rank matrices of
transposed sizes. For further purposes it is useful to extend it to products
of rectangular matrices of general form.
\espace1\begin{theorem}\label{th:prod_rectang}
Let $A$ and $B$ be $m \times n$ and $n \times p$ matrices. Then the
product $AB$ is also structured, and the inequality below holds:
\begin{equation}
\delta^\Delta_{Z_A,N_B}\{AB\} \leq \delta^\Delta_{Z_A,N_A}\{A\} +
\delta^\Delta_{N_A,Z_B}\{I_n\} + \delta^\Delta_{Z_B,N_B}\{B\},
\end{equation}
where $I_n$ denotes the $n \times n$ identity matrix.
\end{theorem}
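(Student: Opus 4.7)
The plan is to avoid the Schur complement construction used in Theorem \ref{th:product}, since that argument required $M$ to be square and invertible, which fails for general rectangular $A$ and $B$. Instead I would prove the bound by a direct telescoping identity and subadditivity of rank.

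The key algebraic step is to expand $Z_A AB - AB N_B$ by inserting and subtracting the intermediate terms $A N_A B$ and $A Z_B B$. Writing
\begin{equation*}
Z_A AB - AB N_B \;=\; (Z_A A - A N_A)\,B \;+\; A\,(N_A - Z_B)\,B \;+\; A\,(Z_B B - B N_B),
\end{equation*}
one recognizes the three displacements on the right-hand side: $\Delta_{Z_A,N_A}\{A\}$ multiplied on the right by $B$, the identity displacement $\Delta_{N_A,Z_B}\{I_n\} = N_A - Z_B$ sandwiched between $A$ and $B$, and $A$ multiplying $\Delta_{Z_B,N_B}\{B\}$ on the left. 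First I would verify that this identity is correct (a one-line cancellation).

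Next I would apply the elementary fact that $\mathrm{rank}(XY) \leq \mathrm{rank}(X)$ and $\mathrm{rank}(XY) \leq \mathrm{rank}(Y)$, so that each of the three summands has rank at most the rank of the corresponding displacement. Subadditivity of rank on the sum then yields exactly
\begin{equation*}
\delta^\Delta_{Z_A,N_B}\{AB\} \;\leq\; \delta^\Delta_{Z_A,N_A}\{A\} + \delta^\Delta_{N_A,Z_B}\{I_n\} + \delta^\Delta_{Z_B,N_B}\{B\}.
\end{equation*}

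There is no real obstacle here; the only thing to be careful about is that $N_A$ and $Z_B$ must both be $n \times n$ matrices for the identity to make sense (so that $\Delta_{N_A,Z_B}\{I_n\}$ is defined), which is automatic from the dimensional compatibility of the three displacement operators appearing in the statement. This direct approach also makes clear why the full-rank and transposed-size hypotheses of Theorem \ref{th:product} can be dropped: they were artifacts of the Schur complement machinery, not of the underlying rank inequality.
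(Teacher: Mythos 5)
Your proof is correct and is essentially identical to the paper's own argument: the same telescoping decomposition $\Delta_{Z_A,N_B}(AB) = \Delta_{Z_A,N_A}A \cdot B + A\cdot \Delta_{N_A,Z_B}I_n \cdot B + A \cdot \Delta_{Z_B,N_B}B$, followed by subadditivity of rank and the fact that the rank of a product is bounded by the rank of each factor. Nothing is missing.
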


\espace1\begin{proof}
Write first the displaced matrix as
$$ \Delta_{Z_A,N_B} AB = (Z_A A - A N_A)B + A(N_A B - B N_B). $$
Then splitting the second term into $A(N_A-Z_B)B + A(Z_B B - B N_B)$ gives
\begin{equation}\label{eq:DeltaAB}
\Delta_{Z_A,N_B}(AB) = \Delta_{Z_A,N_A}A~.~B+A~.~\Delta_{N_A,Z_B}I~.~B
+A~.~\Delta_{Z_B,N_B}B,
\end{equation}
which eventually proves the theorem, since the rank of a product is always
smaller than the rank of each of its terms.
\end{proof}
A similar result holds for displacement $\nabla$. A direct consequence of
equation (\ref{eq:DeltaAB}) is the following corollary, that looks like a
differentiation rule.

\espace1\begin{corollary}\label{cor:product}
When $A$ and $B$ have dual displacement patterns, we obtain the following
simple result:
$$ \Delta_{Z,Z}(AB) = \Delta_{Z,N}A~.~B + A~.~\Delta_{N,Z}B. $$
\end{corollary}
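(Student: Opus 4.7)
The plan is to derive this directly from equation (\ref{eq:DeltaAB}), which was established in the proof of Theorem \ref{th:prod_rectang}, by specializing the displacement patterns. The phrase ``dual displacement patterns'' should be interpreted as follows: if $A$ is assigned the pair $\{Z_A,N_A\}=\{Z,N\}$, then the dual pattern for $B$ is $\{Z_B,N_B\}=\{N,Z\}$ (i.e.\ the roles of the two displacement matrices are swapped, in analogy with the dual patterns that appeared in Theorem \ref{th:inverse}).

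With this identification, the outer displacement on $AB$ becomes $\Delta_{Z_A,N_B}=\Delta_{Z,Z}$, which already matches the left-hand side of the claimed identity. The first term of (\ref{eq:DeltaAB}), namely $\Delta_{Z_A,N_A}A\cdot B$, becomes $\Delta_{Z,N}A\cdot B$; the last term $A\cdot\Delta_{Z_B,N_B}B$ becomes $A\cdot\Delta_{N,Z}B$. These are exactly the two terms that appear on the right-hand side of the corollary.

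The only remaining point is to check that the middle term in (\ref{eq:DeltaAB}) drops out. That term equals $A\cdot\Delta_{N_A,Z_B}I\cdot B = A\cdot\Delta_{N,N}I\cdot B$, and by the definition of $\Delta$ we have $\Delta_{N,N}I = N\cdot I - I\cdot N = 0$ (this is the content of identity (\ref{eq:DeltaSS_I}) applied with $Z$ replaced by $N$). Hence the middle contribution vanishes and the identity follows.

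There is essentially no obstacle here; the statement is a substitution into an already-proved identity, and the main thing to get right is the interpretation of ``dual displacement pattern'' so that the outer operator on $AB$ collapses to $\Delta_{Z,Z}$ while the middle term $\Delta_{N,N}I$ becomes zero. Once those two observations are made, the result is immediate from (\ref{eq:DeltaAB}).
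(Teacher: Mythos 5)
Your proposal is correct and follows exactly the route the paper intends: the corollary is stated there as a direct consequence of equation (\ref{eq:DeltaAB}), obtained by taking $\{Z_A,N_A\}=\{Z,N\}$ and $\{Z_B,N_B\}=\{N,Z\}$ so that the middle term $A\cdot\Delta_{N,N}I\cdot B$ vanishes by (\ref{eq:DeltaSS_I}). Your reading of ``dual displacement patterns'' matches the paper's usage.
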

In particular, if $AB=I$, this displaced matrix is null, because of
(\ref{eq:DeltaSS_I}).

%%% FULL-RANK RECTANGULAR PSEUDO-INVERSE
\espace1\begin{corollary}\label{cor:invRect}
Let $A$ be a full-rank $m \times n$ matrix, with $m \geq n$. Then its
pseudo-inverse $B=(A^*A)^{-1}A^*$ has a reduced displacement rank, as show the
two bounds below:
\begin{eqnarray}
\delta^\Delta_{N,Z}\{B\} \leq \delta^\Delta_{Z,N}\{A\} +
2 \delta^\Delta_{N,Z}\{A^*\}, \label{eq:rectang-pinv} \\
\delta^\Delta_{N,Z^*}\{B\} \leq 3 \delta^\Delta_{Z,N}\{A\} +
\delta^\Delta_{Z^*,Z}\{I_m\}.
\end{eqnarray}
\end{corollary}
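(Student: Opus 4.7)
The plan is to view $B = (A^{*}A)^{-1} A^{*}$ as a product of two factors, apply Theorem~\ref{th:prod_rectang} to that product, use Theorem~\ref{th:inverse}(i) to replace the inner $(A^{*}A)^{-1}$ by $A^{*}A$, and then apply Theorem~\ref{th:prod_rectang} once more to $A^{*}A$. This gives a nested bound in three slots of free intermediate patterns; everything reduces to choosing those patterns so that identity terms either vanish by~(\ref{eq:DeltaSS_I}) or collapse to the single $\delta^{\Delta}_{Z^{*},Z}\{I_{m}\}$ term of the statement.

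More precisely, I would first write
$$
\delta^{\Delta}_{N,X}\{B\} \;\le\; \delta^{\Delta}_{N,P}\{(A^{*}A)^{-1}\} + \delta^{\Delta}_{P,Q}\{I_{n}\} + \delta^{\Delta}_{Q,X}\{A^{*}\}
$$
with $X \in \{Z,Z^{*}\}$, then use Theorem~\ref{th:inverse}(i) to rewrite $\delta^{\Delta}_{N,P}\{(A^{*}A)^{-1}\} = \delta^{\Delta}_{P,N}\{A^{*}A\}$, and expand the latter by Theorem~\ref{th:prod_rectang} with intermediate $m\times m$ patterns $R,S$:
$$
\delta^{\Delta}_{P,N}\{A^{*}A\} \;\le\; \delta^{\Delta}_{P,R}\{A^{*}\} + \delta^{\Delta}_{R,S}\{I_{m}\} + \delta^{\Delta}_{S,N}\{A\}.
$$
For the first inequality ($X=Z$) I choose $P=Q=N$ and $R=S=Z$, which kills both identity displacements since $\delta^{\Delta}_{N,N}\{I_{n}\}=\delta^{\Delta}_{Z,Z}\{I_{m}\}=0$ by~(\ref{eq:DeltaSS_I}); the two remaining $\delta^{\Delta}_{N,Z}\{A^{*}\}$ contributions and the single $\delta^{\Delta}_{Z,N}\{A\}$ contribution assemble to $\delta^{\Delta}_{Z,N}\{A\} + 2\,\delta^{\Delta}_{N,Z}\{A^{*}\}$.

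For the second inequality ($X=Z^{*}$) the delicate point is that we are no longer allowed to leave $\delta^{\Delta}_{N,Z}\{A^{*}\}$ in the bound — the target is written purely in terms of $A$. The key ingredient is the transposition identity
$$
\delta^{\Delta}_{N_{0},Z_{0}}\{A^{*}\} \;=\; \delta^{\Delta}_{Z_{0}^{*},N_{0}^{*}}\{A\},
$$
which follows from $(NA^{*}-A^{*}Z)^{*} = AN^{*} - Z^{*}A$ and the invariance of rank under transposition and negation. To exploit it I pick $Q = N^{*}$ (so that the trailing $\delta^{\Delta}_{Q,Z^{*}}\{A^{*}\}$ becomes $\delta^{\Delta}_{Z,N}\{A\}$), then $P = N^{*}$ (so that $\delta^{\Delta}_{P,Q}\{I_{n}\} = \delta^{\Delta}_{N^{*},N^{*}}\{I_{n}\} = 0$), and finally $R=Z^{*}$, $S=Z$ in the expansion of $A^{*}A$, so that $\delta^{\Delta}_{N^{*},Z^{*}}\{A^{*}\} = \delta^{\Delta}_{Z,N}\{A\}$ again, $\delta^{\Delta}_{Z^{*},Z}\{I_{m}\}$ survives as the only identity term, and $\delta^{\Delta}_{Z,N}\{A\}$ appears once more from the last slot. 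Summing yields exactly $3\,\delta^{\Delta}_{Z,N}\{A\} + \delta^{\Delta}_{Z^{*},Z}\{I_{m}\}$.

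The main obstacle is not any hard estimate but the bookkeeping of which intermediate patterns to insert: the Theorem~\ref{th:prod_rectang} bound leaves four free matrices $(P,Q,R,S)$, and one must check that the same choice simultaneously (a)~respects the required dimensions, (b)~allows the transposition identity to convert $A^{*}$‑terms into $A$‑terms, and (c)~makes the $I_{n}$ identity term vanish while forcing the $I_{m}$ term into the specific form $\delta^{\Delta}_{Z^{*},Z}\{I_{m}\}$. Once the choices above are made, both inequalities fall out of assembling the two nested applications of Theorem~\ref{th:prod_rectang} with Theorem~\ref{th:inverse}(i).
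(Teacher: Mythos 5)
Your proposal is correct and follows essentially the same route as the paper: decompose $B=(A^*A)^{-1}A^*$, bound the product via Theorem~\ref{th:prod_rectang} (your choice of intermediate patterns making the identity terms vanish is exactly what the paper's Corollary~\ref{cor:product} packages), convert $(A^*A)^{-1}$ to $A^*A$ via Theorem~\ref{th:inverse}, and expand $A^*A$ as a product again. Your explicit pattern bookkeeping and the transposition identity $\delta^\Delta_{N_0,Z_0}\{A^*\}=\delta^\Delta_{Z_0^*,N_0^*}\{A\}$ correctly reproduce both stated bounds, including the second one that the paper leaves implicit.
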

\begin{proof}
Apply corollary \ref{cor:product} to $A^*A$, next theorem \ref{th:inverse},
and lastly theorem \ref{th:prod_rectang}.
\end{proof}

\espace1\begin{example}\rm
If $A$ is T\"oplitz, equation (\ref{eq:rectang-pinv}) claims that
$\delta^\Delta_{S,S}\{B\} \leq 6$. In practice, it seems that no T\"oplitz
matrix could yield a displacement rank larger than
$\delta^\Delta_{S,S}\{B\} = 4$, which suggests that the bound is much too
large.
\end{example}

%%% MP PSEUDO INVERSE
\espace1\begin{definition}
Given any matrix $A$, if a matrix $A^-$ satisfies
\begin{eqnarray}
(i)~ AA^- A = A, & (ii)~ A^-AA^- = A^-, \nonumber \\
(iii)~ (AA^-)^* = AA^-, & (iv)~ (A^-A)^* = A^- A, \label{def:A-}
\end{eqnarray}
then it will be called the Moore-Penrose (MP) pseudo-inverse of $A$.
A so-called generalized inverse need only to satisfy conditions (i) and (ii).
\end{definition}

\espace1
It is well known that $A^-$ is unique, and that $A^-$ and $A^*$ have the same
range and the same null space \cite{GV83}. On the other hand, a generalized
inverse is not unique.
When a matrix $A$ is rank deficient, it is in general not possible to
construct a MP pseudo-inverse  having the same displacement rank, as will be
demonstrated in section \ref{sec:MP}.
\medskip
%%%%%%%%%%%%%%%%%%%%%%%%%%%%%%%%%%%%%%%%%%%%%%%%%%%%%%%%%%%%%%%%%%%%%%%%%%%%%%
\section{The space of $P$-symmetric matrices}
In this section, more specific properties shared by matrices in a wide class
will be investigated. The property of $P$-symmetry will be necessary in
section \ref{sec:MP} to transform a matrix into its transposed just by a
congruent transformation.

\espace1\begin{definition}
Let $P$ be a fixed orthogonal $n$ by $n$
matrix.  The set of $P$-symmetric matrices is defined as follows:
\begin{equation}\label{def:SP}
\scs_P = \{ M\in \rb^{n\times n} / PMP^* = M^*\},
\end{equation}
where $({}^*)$ denotes transposition and $\rb$ the set of real numbers.
\end{definition}

\espace1
It will be assumed in this section that the matrix to invert (or the system to
solve) belongs to $\scs_P$, for some given known orthogonal matrix $P$.  For
instance, if a matrix $A$ is square and T\"oplitz, then it is centro-symmetric
and satisfies
$$
JAJ^* = A^*,
$$
which shows that $A\in\scs_J$, where $J$ denotes the reverse identity:
\begin{equation}\label{def:J}
J = \left( \begin{array}{ccc}
 &&1\\
 & \didots & \\
 1 & &
\end{array} \right).
\end{equation}
If $A$ is Hankel, then $A\in \scs_I$ because $A$ is symmetric.  The property
of $P$-symmetry is interesting for it is preserved under many transformations.
For instance, singular vectors of a $P$-symmetric matrix are $P$-symmetric in
the sense that if $\{u,v,\sigma\}$ is a singular triplet, then so is $\{Pv,
Pu, \sigma\}$. A sum or a product of $P$-symmetric matrices is $P$-symmetric.

\espace1\begin{example}\rm\label{ex:alterTopl}
Define the `alternate T\"oplitz matrix' below
$$ A = \left( \mbox{\small\tabcolsep=3pt\begin{tabular}{rrrrr}
2 &-2 &-2 & 1 & 1 \\
1 &-2 &-2 & 2 & 1 \\
1 & 1 & 2 &-2 &-2 \\
4 &-1 & 1 &-2 &-2 \\
-8& 4 & 1 & 1 & 2 \end{tabular} } \right), $$
and assume the displacement pattern
$$ Z = \left( \mbox{\small\tabcolsep=3pt\begin{tabular}{rrrrr}
0 & 0 & 0 & 0 & 0 \\
1 & 0 & 0 & 0 & 0 \\
0 &-1 & 0 & 0 & 0 \\
0 & 0 & 1 & 0 & 0 \\
0 & 0 & 0 &-1 & 0 \end{tabular} } \right),
{\rm ~and~} N = - Z^*.$$
Then we have $P A P^* = A^*$ as requested in the definition above, with $P=J$.
This matrix has displacement ranks $\dnZN\{A\} = 2$ and
$\delta^\Delta_{Z^*,N}\{A\} = 2$, and is singular. The displacement rank of
its MP pseudo inverse will be calculated in example \ref{ex:pinv}.
\end{example}

\espace1
\begin{property}
The properties of $P$-symmetry and $P^*$-symmetry are equivalent.
\end{property}

\espace1
\begin{proof}  Let $A$ be $P$-symmetric.  Then transposing (\ref{def:SP})
gives $M=PM^*P^*$.  Next pre- and post-multiplication by $P^*$ and $P$,
respectively, yields $P^*MP=M^*$.
\end{proof}
\espace1
\begin{theorem}\label{th:P*-sym}
If $A$ is $P$-symmetric, then so is $A^{-1}$ whenever $A$ is invertible. If
$A$ is singular, then its Moore-Penrose inverse, $A^-$, is also $P$-symmetric.
\end{theorem}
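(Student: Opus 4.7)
The plan is to treat the two cases in parallel: in each one I will exhibit an inverse (ordinary or Moore--Penrose) of $A$ that is manifestly $P$-symmetric, and then invoke uniqueness to conclude. The orthogonality of $P$ (so that $P^{-1}=P^*$ is available without any nilpotency or other hypothesis) is what makes everything go through.

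For the invertible case, I would simply start from the defining identity $PAP^*=A^*$ and invert both sides. Since $P$ is orthogonal, $(PAP^*)^{-1}=PA^{-1}P^*$, and of course $(A^*)^{-1}=(A^{-1})^*$. Equating the two yields $PA^{-1}P^*=(A^{-1})^*$, i.e.\ $A^{-1}\in\scs_P$. This is essentially a one-line computation and will occupy the first part of the argument.

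For the singular case, the same one-line idea does not apply directly because we cannot literally invert, so I would replace inversion by the observation that orthogonal changes of coordinates intertwine with the Moore--Penrose pseudo-inverse. Concretely, the key lemma to establish is: \emph{if $U$ and $V$ are orthogonal and $M=UAV$, then $M^-=V^*A^-U^*$.} This is proved by plugging $M$ and $V^*A^-U^*$ into the four defining relations (\ref{def:A-}); each of the four conditions collapses immediately using $UU^*=I$, $VV^*=I$ and the fact that $A^-$ itself satisfies (\ref{def:A-}). The key obstacle, if there is one, is nothing more than being careful with this verification; no sign or order issue is subtle, because $P$ is real orthogonal.

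Applying the lemma with $U=P$ and $V=P^*$ to the relation $A^*=PAP^*$ yields $(A^*)^-=PA^-P^*$. On the other hand, transposing any Moore--Penrose inverse gives the Moore--Penrose inverse of the transpose (again by direct inspection of (\ref{def:A-})), so $(A^*)^-=(A^-)^*$. Combining these two equalities produces $PA^-P^*=(A^-)^*$, which is exactly the statement that $A^-\in\scs_P$. Uniqueness of $A^-$, which is recalled just before the theorem, guarantees that no other Moore--Penrose inverse could contradict this identification. This completes the outline.
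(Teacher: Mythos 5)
Your proposal is correct and follows essentially the same route as the paper: the invertible case is the identical one-line inversion of $PAP^*=A^*$, and the singular case ultimately rests on exhibiting the candidate $PA^{-*}P^*$ (equivalently $(A^*)^-=PA^-P^*$), checking the four Penrose conditions, and invoking uniqueness. Your packaging of that verification into the general lemma $(UAV)^-=V^*A^-U^*$ for orthogonal $U,V$, together with $(A^*)^-=(A^-)^*$, is just a cleaner modularization of the same computation the paper carries out directly.
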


\espace1 \begin{proof}
Inversion of both sides of the relation $PAP^* = A^*$ yields immediately
$PA^{-1} P^* = A^{-1*}$.
Now to insure that when $A$ is singular, $A^-$ is $P$-symmetric,
it suffices to prove that the matrix $B=PA^{-*} P^*$
indeed satisfies the four conditions of definition
(\ref{def:A-}). First, $ABA=APA^{-*}P^*A$ yields
$ABA=PA^* A^{-*} A^* P^* = PA^* P=A$, which shows
(i) of (\ref{def:A-}).  Second, $BAB=PA^{-*} P^* APA^{-*} P^*$ yields
similarly $BAB =P A^{-*} A^* A^{-*} P^* = PA^{-*} P^*$,
which equals $B$ by definition.  Next to prove (iii), consider
$AB=APA^{-*} P^*$, which gives after premultiplication by $PP^*$:
$AB=PA^*A^{-*}P^*$. But since $(A^-A)^*=A^-A$, we have $AB=PA^-AP^*$. Then
insertion of $P^*P$ yields finally
$AB=PA^-P^*A^*$, which is nothing else then $B^*A^*$.
The proof of (iv) can be derived in a similar manner.\end{proof}

It may be seen that in the last proof, $A$ does not need to be a normal
matrix, which was requested in a similar statement in \cite{HBW90}.
On the other hand, it is true that if $A$ is
$P$-symmetric, $AA^*$ is in general not $P$-symmetric.

\medskip
%%%%%%%%%%%%%%%%%%%%%%%%%%%%%%%%%%%%%%%%%%%%%%%%%%%%%%%%%%%%%%%%%%%%%%%%%%%%%%
\section{Displacement of MP pseudo-inverses}\label{sec:MP}
In section \ref{sec:various}, it has been shown among other things that the
pseudo-inverse of a full-rank matrix is structured. It will be now analyzed
how the rank deficience weakens the structure of the MP pseudo-inverse.

\espace1\begin{theorem}\label{th:pinv}
Let $A$ be a P-symmetric square matrix, and let $Z$ and $N$ be two
displacement matrices linked by the relation
\begin{equation}\label{eq:relationNZ}
PZP = N.
\end{equation}
Then the displacement ranks of $A$ and $A^-$ are related by
\begin{equation}
 \dnNZ\{A^-\} \leq 2 \dnZN\{A\}.
\end{equation}
\end{theorem}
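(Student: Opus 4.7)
The plan is to combine the $P$-symmetry of the pseudo-inverse (furnished by Theorem~\ref{th:P*-sym}) with the Moore--Penrose identity $A^-AA^-=A^-$ and the rank-$\delta$ factorization of $A-ZAN$, where $\delta=\delta^\nabla_{Z,N}\{A\}$. I aim to express $\nabla_{N,Z}A^-$ as the sum of two matrices each of rank at most $\delta$, which accounts for the factor $2$ in the stated bound.

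First I would write $A-ZAN=XY^*$ with $X,Y$ having $\delta$ columns. Sandwiching by $A^-$ and invoking $A^-AA^-=A^-$ yields
$$A^-(A-ZAN)A^- \;=\; A^- - A^-ZANA^- \;=\; A^-XY^*A^-,$$
so $A^- - A^-ZANA^-$ has rank at most $\delta$. This is the first summand of the splitting
$$\nabla_{N,Z}A^- \;=\; \bigl(A^- - A^-ZANA^-\bigr) \;+\; \bigl(A^-ZANA^- - NA^-Z\bigr).$$

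For the remaining correction $A^-ZANA^- - NA^-Z$, the $P$-symmetry hypothesis is essential. Using $A=P^*A^*P$, $A^-=P^*(A^-)^*P$ together with $NP^*=PZ$ and $P^*N=ZP$ (both consequences of $PZP=N$ and the orthogonality of $P$), I would conjugate the entire correction by $P$ and rewrite it purely in terms of $A^*$ and $(A^-)^*$. In the conjugated form the combination $A^*-NA^*Z$ appears explicitly; by the same manipulation used in the proof of Theorem~\ref{th:linkDN} (namely $P(A-ZAN)P^*=A^*-NA^*Z$), this combination also has rank $\delta$. Sandwiching it by $(A^-)^*$ as in the first step produces a rank-$\delta$ factorization of the correction, and summing the two bounds gives $\delta^\nabla_{N,Z}\{A^-\}\leq 2\delta$.

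The principal obstacle I anticipate is the second step. A purely mechanical use of the $P$-symmetry identities tends to rewrite the correction as a displacement of $A^-$ under the transposed pattern $\nabla_{N^*,Z^*}$, whose rank the same $P$-symmetry equates to that of $\nabla_{N,Z}A^-$; one then obtains only the circular inequality $\delta^\nabla_{N,Z}\{A^-\}\leq 2\delta+\delta^\nabla_{N,Z}\{A^-\}$. Escaping this circularity requires applying the Moore--Penrose sandwich on exactly one side of the rank-$\delta$ factorization of $A^*-NA^*Z$ (equivalently, substituting $A=ZAN+XY^*$ in only one of the two occurrences in the correction), so that the final rank-$\delta$ factor descends from the \emph{primal} matrix $A$ and is not reabsorbed into a displacement of $A^-$.
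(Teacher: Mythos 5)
Your first summand is correct and is a genuinely useful observation: since $A^-AA^-=A^-$, one has $A^- - A^-ZANA^- = A^-\left(\nabla_{Z,N}A\right)A^-$, of rank at most $\delta$. The proof collapses at the second summand: the correction $C=A^-ZANA^--NA^-Z$ does \emph{not} have rank at most $\delta$ in general, so no rearrangement via $P$-symmetry can produce the rank-$\delta$ factorization you describe; the circularity you flag is not an artifact of clumsy bookkeeping but the symptom of a false intermediate claim. A concrete counterexample: take $P=I$, let $Z=N=S$ be the $3\times3$ lower shift matrix of (\ref{def:S}), and take the singular Hankel matrix
$$
A=\left(\begin{array}{ccc}1&1&0\\1&0&0\\0&0&0\end{array}\right),
\qquad
A^-=\left(\begin{array}{ccc}0&1&0\\1&-1&0\\0&0&0\end{array}\right).
$$
Here $\nabla_{Z,N}A$ has only its first row nonzero, so $\delta=1$, while a direct computation gives
$$
C=A^-ZANA^--NA^-Z=\left(\begin{array}{ccc}0&1&0\\-1&-1&0\\1&0&0\end{array}\right),
$$
whose leading $2\times2$ minor equals $1$, so $rank\{C\}=2=2\delta$. (The theorem itself is tight here: $\nabla_{N,Z}A^-$ also has rank $2$, the two summands overlapping.) Consequently the additive splitting can never deliver the bound $2\delta$; even if one could control $C$ by $2\delta$, summing would only give $3\delta$.

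The paper's proof is of a different nature: rather than writing $\nabla_{N,Z}A^-$ as a sum of two small-rank matrices, it exhibits two full-column-rank $n\times(n-\delta)$ matrices $E_1,E_2$ satisfying $E_2^*\left(\nabla_{N,Z}A^-\right)E_1=0$, which bounds the rank by $\delta+\delta$, each $E_i$ removing $\delta$ from one side. The columns of $E_1$ are taken from $AN\cdot Ker\,\nabla_{Z,N}A$ together with $P\cdot\left(Ker\,AN\cap Ker\,\nabla_{Z,N}A\right)$, and dually for $E_2$; the hypotheses $PAP^*=A^*$ and $PZP=N$ enter precisely to show that these two pieces are orthogonal (so that $E_i$ really has $n-\delta$ independent columns) and that the sandwich vanishes. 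If you want to salvage an algebraic route, the budget of $2\delta$ must be spent as $\delta$ on the left and $\delta$ on the right of $\nabla_{N,Z}A^-$, not as two rank-$\delta$ summands.
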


\espace1
In this theorem, the condition (\ref{eq:relationNZ}) is satisfied in
particular for both close to T\"oplitz and close to Hankel matrices, with
$(P,Z,N)=(J,S,S^*)$ and $(P,Z,N)=(I,S,S)$, respectively.

\espace1\begin{proof}
For conciseness, denote in short $\delta$ the displacement rank $\dnZN\{A\}$,
and assume A is $n \times n$.
In order to prove the theorem, it is sufficient to find two full-rank
$n \times n-\delta$ matrices $E_1$ and $E_2$ such that
\begin{equation}\label{eq:E2DE1=zero}
E_2^* \nabla_{N,Z}\{A^-\} E_1 = 0.
\end{equation}
For this purpose, define the following full-rank matrices with $n$ rows:
\begin{equation}\label{eq:sets-def}
\begin{array}{rcl}
G_1 = & matrix~whose~columns~span & Ker \nabla A \\
G_2 = & matrix~whose~columns~span & Ker (\nabla A)^* \\
K_1 = & matrix~whose~columns~span & Ker AN \cap Ker \nabla A \\
K_2 = & matrix~whose~columns~span & Ker (ZA)^* \cap Ker (\nabla A)^* \\
V_1 = & matrix~whose~columns~span & A N G_1 \\
V_2 = & matrix~whose~columns~span & A^* Z^* G_2.
\end{array}
\end{equation}\label{eq:Ei-def}
Then define the two matrices $E_i$ as:
$$
E_i = [ V_i,~ W_i ], with~ W_i = P K_i.
$$
Let us prove first that $E_i$ are indeed of rank $n-\delta$, and
then that (\ref{eq:E2DE1=zero}) is satisfied.\par
From (\ref{eq:sets-def}), we have by construction $A K_1 = 0$.
Then inserting a factor $P^*P$
and premultiplying by P gives $P A P^* P K_1 = 0$, which shows that 
$A^* W_1 = 0$. Yet, $V_1$ is in the range of $A$ by definition, and thus
$V_1$ and $W_1$ are necessarily orthogonal as members of the orthogonal
subspaces $KerA^*$ and $ImA$. In addition, $P$ is bijective so that $W_1$
has the same dimension as $K_1$. As a consequence, $dimE_1= dimV_1 + dimK_1$,
which is nothing else but $dimG_1$ if we look at the definitions
(\ref{eq:sets-def}).
Similarly, one can show that  $W_2$ and $V_2$ are orthogonal because 
$W_2 \subset KerA$. This yields after the same argumentation that
$dimE_2=dimG_2=n-\delta$.\par
Now it remains to prove (\ref{eq:E2DE1=zero}). To do this, it is shown that
the four blocks of $E_2^* \nabla A^- E_1$ are zero.
The quantity $\mu=V_2^*\nabla A^-V_1$ is null since $\mu=G_2^*ZA(A^- -
NA^-Z)ANG_1$ can be written $\mu=G_2^*ZANG_1-G_2^*ZANA^-ZANG_1$, which is the
difference of two identical terms by construction of matrices $G_i$.
In fact from (\ref{eq:sets-def}), $ZANG_1 = AG_1$ and $G_2^*ZAN = G_2^*A$.
Next $W_2^* \nabla A^-$ is null because $W_2^* \nabla A^*$ is null
(remember that $A^-$ and $A^*$ have the same null space). In fact,
$W_2^* \nabla A^* = K_2^*P^*A^*-K_2^*P^*NA^*Z$ by definition of $W_2$ and
$\nabla$. Now using the relation
(\ref{eq:relationNZ}) and $P$-symmetry of A yield
$W_2^* \nabla A^*P = K_2^*A - K_2^*ZAN$.
These two terms are eventually null by construction of $K_2$.
It can be proved in a similar manner that $\nabla A^* W_1=0$. In fact,
$\nabla A^* W_1=A^*PK_1-NA^*ZPK_1$ implies
$P^*\nabla A^* W_1= AK_1 - P^*NP^*PA^*P^* PZP K_1$.
Again these two terms can be seen to be zero utilizing (\ref{eq:relationNZ}),
$P$-symmetry of A, and the definition (\ref{eq:sets-def}) of $K_1$.
\end{proof}
This theorem is an extension of a result first proved in \cite{CL90}. As
pointed out in \cite{C92}, when the displacement rank of $A$ is larger than
its rank, the theorem above gives too weak results as is next shown.

\espace1\begin{theorem}\label{th:rank_pinv}
Let $A$ be a square matrix, and denote by $r\{A\}$ its rank. Then there exist 
two other bounds for the displacement rank of its MP pseudo-inverse:
\begin{eqnarray}
 \dnNZ\{A^-\} < 2 r\{A\} ~~if~ \dnZN\{A\} < 2 r\{A\}, {\rm ~and}
\label{eq:pinv2}\\
 \dnNZ\{A^-\} \leq 2 r\{A\} {\rm ~otherwise}. \label{eq:pinv3}
\end{eqnarray}
\end{theorem}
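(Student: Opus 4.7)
My plan is to establish the two bounds by rather different arguments. For the universal bound (\ref{eq:pinv3}), the cleanest route is rank subadditivity applied to $\nabla_{N,Z}A^- = A^- - NA^-Z$: each of the two summands has rank at most $r\{A\}$ (trivially for $A^-$; the product $NA^-Z$ factors through $A^-$, hence has the same upper bound), so their difference has rank at most $2 r\{A\}$. No hypothesis on $\dnZN\{A\}$ is needed for this half.

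For the strict bound (\ref{eq:pinv2}) under $d := \dnZN\{A\} < 2 r\{A\}$, I would begin from a minimal generator $A - ZAN = XY^*$ with $X, Y$ of size $n \times d$, i.e.\ $A = ZAN + XY^*$. Substituting this into the Moore-Penrose identity $A^- = A^-AA^-$ yields
\[ A^- = A^-ZANA^- + A^-XY^*A^-, \]
in which the last summand has rank at most $d$. Pushing the substitution through both occurrences of $A^-$ in $A^- - NA^-Z$ and regrouping produces the decomposition
\[ A^- - NA^-Z = (K - NKZ) + (\Phi - N\Phi Z), \]
where $K = A^-ZANA^-$ has rank at most $r\{A\}$ (it factors through $A^-$) and $\Phi = A^-XY^*A^-$ has rank at most $d$. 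An equivalent SVD-based view, writing $A = U\Sigma V^*$, represents $A^- - NA^-Z$ as $[V \mid -NV]\,\mathrm{diag}(\Sigma^{-1})\,[U \mid Z^*U]^*$, and in this form the hypothesis $\dnZN\{A\} < 2r$ should translate into a rank deficiency of one of the two outer blocks.

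The main obstacle is precisely this transfer of rank deficiency. Subadditivity alone already gives $\leq 2r\{A\}$ and this bound is attainable, so strictness cannot come from bookkeeping; it must come from a dimension count showing that when $A$ and $ZAN$ share a nontrivial column or row direction (which is what $d < 2r\{A\}$ expresses through the SVD factorisation $A - ZAN = [U \mid -ZU]\,\mathrm{diag}(\Sigma)\,[V \mid N^*V]^*$), a corresponding overlap must occur for the column or row spaces of $A^-$ and $NA^-Z$. Making this passage from the primal to the dual displacement rigorous, rather than the algebraic decompositions surrounding it, is where the proof will be delicate.
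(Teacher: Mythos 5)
Your proof of (\ref{eq:pinv3}) by rank subadditivity is exactly the paper's argument. For (\ref{eq:pinv2}) you have found the right factorisation but have explicitly left open the one step that carries the whole proof, and that gap is real: as you have set things up it cannot be closed. Writing $A=U\Sigma V^*$ with $\Sigma$ invertible of size $r=r\{A\}$, the primal displacement factors as $\nabla_{Z,N}A=[U\mid ZU]\,\mathrm{Diag}(\Sigma,-\Sigma)\,[V\mid N^*V]^*$, so the hypothesis $\dnZN\{A\}<2r$ forces one of the outer blocks $[U\mid ZU]$, $[V\mid N^*V]$ to have rank $<2r$ (an injective $n\times 2r$ factor times an invertible middle times a factor surjective onto $\rb^{2r}$ would give rank exactly $2r$). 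But the factorisation you write for the dual displacement, $A^--NA^-Z=[V\mid NV]\,\mathrm{Diag}(\Sigma^{-1},-\Sigma^{-1})\,[U\mid Z^*U]^*$, has \emph{different} outer blocks: $Z^*U$ where the primal one has $ZU$, and $NV$ where it has $N^*V$. Rank deficiency does not transfer between such blocks: with $Z=S$ and $U=e_n$ one gets $[U\mid ZU]=[e_n\mid 0]$ of rank $1$ while $[U\mid Z^*U]=[e_n\mid e_{n-1}]$ has rank $2$ (and symmetrically with $U=e_1$). So the ``delicate passage'' you defer is not a technicality; along your route there is nothing to pass.

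The paper's way out is to never attempt this transfer. Instead of displacing $A^-$ with the dual pattern, it displaces the transpose with the original pattern: $A^{-*}-ZA^{-*}N=[U\mid ZU]\,\mathrm{Diag}(\Sigma^{-1},-\Sigma^{-1})\,[V\mid N^*V]^*$, which has the \emph{same} outer factors as $\nabla_{Z,N}A$ and only a different (still invertible) middle factor. Whichever outer block the hypothesis forces to be rank deficient therefore kills the rank of this displaced matrix as well, giving $rank\{A^{-*}-ZA^{-*}N\}<2r$ with no further argument. (Strictly this bounds $\delta^\nabla_{N^*,Z^*}\{A^-\}$, which agrees with $\dnNZ\{A^-\}$ when $Z$ and $N$ are symmetric or when the dual pattern is read as including transposition; the paper glosses over this point.) Finally, your opening decomposition via a generator $A=ZAN+XY^*$ substituted into $A^-=A^-AA^-$ is a dead end: the term $K-NKZ$ with $K=A^-ZANA^-$ admits no bound better than $2r\{A\}$, so that split yields at best $2r\{A\}+2\dnZN\{A\}$, which is weaker than the trivial bound you already have.
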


\espace1\begin{proof}
The proof of (\ref{eq:pinv3}) is easy. In fact, it holds true for any matrix
$M$ since
$$rank\{M - Z M N\} \leq rank\{M\} + rank\{Z M N\} \leq 2 rank\{M\}$$
is always true. So let us prove (\ref{eq:pinv2}). Since $A$ has rank $r\{A\}$,
it may be written as
$$ A = U \Sigma V^*, $$
where $\Sigma$ is invertible and of size $r\{A\}$. Define the matrices
$A = [U ~ ZU]$, $B = [V ~ N^*V]$, and $\Lambda = Diag(\Sigma, -\Sigma)$.
Then it may be seen that
$$ \nabla A = A \Lambda B^*. $$
Since $\Lambda$ is of full rank, either $A$ or $B$ must be rank defficient,
otherwise $\nabla A$ would be of rank $2 rank\{A\}$ which is contrary to the
hypothesis. Thus assume without restricting the generality of the proof that
$rank\{A\} < 2 r\{A\}$. Then $rank\{\nabla A^-\} < 2 r\{A\}$ because:
$$ \nabla A^{- *} = A \Lambda^{-1} B^*. $$
This completes the proof.
\end{proof}

\espace1\begin{corollary}\label{cor:pinvHT}
Let $T$ and $H$ be close to T\"oplitz and close to Hankel square matrices,
respectively. Then 
\begin{eqnarray}
  \delta^\nabla_{S^*,S}\{T^-\} \leq 2  \delta^\nabla_{S,S^*}\{T\}, 
~~\delta^\nabla_{S,S}\{H^-\}  \leq 2  \delta^\nabla_{S,S}\{H\},
\label{eq:pinv-Nabla} \\
  \delta^\Delta_{S,S}\{T^-\}  \leq 2  \delta^\Delta_{S,S}\{T\}+1,
~~\delta^\Delta_{S^*,S}\{H^-\} \leq 2  \delta^\Delta_{S,S^*}\{H\}+1.
\label{eq:pinv-Delta}
\end{eqnarray}
\end{corollary}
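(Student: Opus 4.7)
The plan is to derive (\ref{eq:pinv-Nabla}) as a direct application of Theorem \ref{th:pinv}, and then to obtain (\ref{eq:pinv-Delta}) by bridging between the $\Delta$ and $\nabla$ displacement operators via Theorem \ref{th:linkDN} together with (\ref{eq:pinv-Nabla}).

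For (\ref{eq:pinv-Nabla}), I would invoke Theorem \ref{th:pinv} twice. For the close-to-T\"oplitz matrix $T$, take $(P,Z,N)=(J,S,S^*)$ with $J$ the reverse identity of (\ref{def:J}): any T\"oplitz matrix is centro-symmetric, hence $J$-symmetric ($JTJ=T^*$), and a direct computation shows $JSJ=S^*$, so the hypothesis $PZP=N$ is fulfilled. For the close-to-Hankel matrix $H$, take $(P,Z,N)=(I,S,S)$: $H$ is symmetric (hence trivially $I$-symmetric) and $ISI=S$. Theorem \ref{th:pinv} then delivers the two inequalities of (\ref{eq:pinv-Nabla}) directly.

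For (\ref{eq:pinv-Delta}), the idea is to use Theorem \ref{th:linkDN} to replace the $\Delta$-displacement of $T^-$ (resp.~$H^-$) by the corresponding $\nabla$-displacement, at the cost of an additive $\delta^\nabla\{I\}=1$ coming from (\ref{eq:NablaSS*_I}). Concretely, for the T\"oplitz case one applies Theorem \ref{th:linkDN} with $Z=N=S$ to obtain
\[
\delta^\Delta_{S,S}\{T^-\}\;\leq\;\delta^\nabla_{S^*,S}\{T^-\}+\delta^\nabla_{S,S^*}\{I\}\;=\;\delta^\nabla_{S^*,S}\{T^-\}+1,
\]
and then substitutes the T\"oplitz half of (\ref{eq:pinv-Nabla}) to reach the bound $2\,\delta^\nabla_{S,S^*}\{T\}+1$. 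The Hankel case is strictly analogous, with the choice $Z=S^*$, $N=S$ in Theorem \ref{th:linkDN} producing $\delta^\Delta_{S^*,S}\{H^-\}\leq\delta^\nabla_{S,S}\{H^-\}+1$, after which the Hankel half of (\ref{eq:pinv-Nabla}) finishes the argument.

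The one delicate point is that the derivation above naturally produces bounds in terms of the $\nabla$-displacement rank of $T$ (resp.~$H$), whereas (\ref{eq:pinv-Delta}) is stated in terms of the $\Delta$-displacement rank. For an exact T\"oplitz or Hankel matrix the two ranks coincide by Example \ref{ex:T-and-H}, so the identification is immediate; for a genuinely close-to-structured matrix it reflects the convention that one and the same small integer measures structuredness under either operator, and a fully formal route would reapply Theorem \ref{th:linkDN} in the opposite direction on $T$ or $H$ itself, yielding a slightly weaker constant that coincides with the stated one on the matrices of interest.
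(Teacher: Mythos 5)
Your proof follows essentially the same route as the paper's own (two-line) proof: Theorem \ref{th:pinv} with $(P,Z,N)=(J,S,S^*)$ and $(I,S,S)$ for the $\nabla$ bounds, then Theorem \ref{th:linkDN} with $\delta^\nabla_{S,S^*}\{I\}=1$ to pass to the $\Delta$ bounds. Your closing remark correctly flags a point the paper glosses over: this chain yields right-hand sides in $\delta^\nabla\{T\}$ and $\delta^\nabla\{H\}$ rather than the stated $\delta^\Delta$, and the two coincide only for exactly T\"oplitz/Hankel matrices (otherwise an extra additive constant appears), just as the $P$-symmetry of $T$ and $H$ required by Theorem \ref{th:pinv} is tacitly assumed both in your argument and in the paper's.
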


\espace1\begin{proof}
To prove (\ref{eq:pinv-Nabla}), simply use theorem \ref{th:pinv} and relation
(\ref{eq:NablaSS*_I}). In order to prove equations (\ref{eq:pinv-Delta}),
utilize theorem \ref{th:linkDN} and relation (\ref{eq:DeltaSS_I}).
\end{proof}
Note that the bounds are tight enough to be reached, as now shown in
examples.

\espace1\begin{example}\rm\label{ex:pinv}
Take again the matrix defined in example \ref{ex:alterTopl}.
This matrix is of rank 4 and displacement rank 2. In addition,
the displacement pattern satisfies $PZP=N$ as required in the
theorem \ref{th:pinv}. With the notations of
example \ref{ex:alterTopl}, the MP pseudo inverse of $A$
has displacement ranks $\delta^\Delta_{N,Z^*}\{A^-\} = 4$ and
$\delta^\nabla_{N,Z}\{A^-\} = 4$. This is consistent with theorem
\ref{th:pinv}.
\end{example}

\espace1\begin{example}\rm
Define the $5 \times 5$ T\"oplitz matrix of rank 3 :
$$ A = \left( \mbox{\small\tabcolsep=3pt\begin{tabular}{rrrrr}
2 & 4 & 3 & 1 & 2 \\
1 & 2 & 4 & 3 & 1 \\
3 & 1 & 2 & 4 & 3 \\
4 & 3 & 1 & 2 & 4 \\
2 & 4 & 3 & 1 & 2 \end{tabular} } \right), $$
and assume as displacement pattern $Z=S_5$ and $N=S_5^*$. Then $A$ has a
displacement rank $\delta_{Z,N}^\nabla=2$, and its MP pseudo-inverse has a
displacement rank $\delta_{N,Z}^\nabla$ equal to 4. This result was expected,
according to corollary \ref{cor:pinvHT}.
\end{example}

\espace1\begin{example}\rm
If $H$ is Hankel, then the displacement rank of $H^-$ with 
respect to the displacement operator $\Delta_{S^*,S}$ is bounded by 5.
\end{example}

\espace1
Other particular examples can be found in \cite{CL90} and \cite{C92}.
Let us now switch to the case of rectangular and rank-deficient structured
matrices. In order to extend corollary \ref{cor:invRect}, we need a
variant of the inversion lemma:

\espace1\begin{lemma}
Let $M$ be the block matrix:
$$ M = \left( \begin{array}{cc} P&A_2\\A_1&0 \end{array}\right), $$
where $P$ is square invertible, and where $A_1$ and $A_2$ have the same rank.
Then the MP-pseudo-inverse of $M$ is:
$$ M^- = \left( \begin{array}{cc} 
Y & -P^{-1}A_2X \\ -XA_1P^{-1} & X 
\end{array}\right), $$
where $X=-(A_1P^{-1}A_2)^-$, and $Y=P^{-1}+P^{-1}A_2XA_1P^{-1}$.
\end{lemma}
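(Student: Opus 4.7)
The plan is a direct verification of the four Moore--Penrose conditions of (\ref{def:A-}). I set $Q := A_1 P^{-1} A_2$, so that $X = -Q^-$, and write $N$ for the proposed expression for $M^-$. Because $Y$ has been defined precisely so that $PY = I + A_2 X A_1 P^{-1}$, the block products $MN$ and $NM$ collapse after routine expansion to
\begin{equation*}
MN = \left(\begin{array}{cc} I & 0 \\ (I-QQ^-)A_1P^{-1} & QQ^- \end{array}\right), \quad
NM = \left(\begin{array}{cc} I & P^{-1}A_2(I-Q^-Q) \\ 0 & Q^-Q \end{array}\right).
\end{equation*}

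The next step, and the point where the rank hypothesis enters, is to show that the two off-diagonal blocks vanish, i.e.~that $QQ^- A_1 = A_1$ and $A_2 Q^- Q = A_2$. The assumption $rank\{A_1\}=rank\{A_2\}$, together with the natural requirement that passage through $P^{-1}$ does not drop this common rank (so that $rank\{Q\}$ equals it as well), forces the range of $A_1$ to coincide with that of $Q$ and the range of $A_2^*$ to coincide with that of $Q^*$. Since $QQ^-$ and $Q^-Q$ are the orthogonal projectors onto these ranges, the two cancellations follow, and $MN$ and $NM$ reduce to the block-diagonal matrices $I \oplus QQ^-$ and $I \oplus Q^-Q$. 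Both are symmetric by the Moore--Penrose properties of $Q^-$, which settles conditions (iii) and (iv) of (\ref{def:A-}).

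Conditions (i) and (ii) then require only one further block multiplication each: $MNM$ equals $M$ because $QQ^- A_1 = A_1$ preserves the nonzero blocks of $M$, and $NMN$ equals $N$ because $XQQ^- = -Q^- Q Q^- = -Q^- = X$ leaves each block of $N$ in place. Conceptually, what lies behind the formula is the block factorization $M = LDU$ with $D = P\oplus (-Q)$ and $L$, $U$ unit block-triangular, which suggests the candidate $M^- = U^{-1}D^-L^{-1}$ matching the stated expression; the main obstacle is that this identity for pseudo-inverses is \emph{not} automatic from invertibility of $L$ and $U$ alone, and the rank-compatibility hypothesis is precisely what certifies that the candidate is indeed the true Moore--Penrose inverse.
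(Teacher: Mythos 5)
Your verification is correct, but it follows a genuinely different route from the paper. The paper's proof sandwiches $M$ between the block-diagonal orthogonal matrices built from the SVD factors $U_i, V_i$ of $A_1$ and $A_2$, so that (after a permutation) $\bar{M}$ takes the form $B \oplus 0$ with $B$ square invertible; it then applies the classical block-inversion lemma to $B$, uses the fact that pseudo-inversion commutes with orthogonal equivalence, and leaves the back-substitution as ``obvious manipulations.'' You instead verify the four Penrose axioms directly: the computation of $MN$ and $NM$, the identification of the residual off-diagonal blocks $(I-QQ^-)A_1P^{-1}$ and $P^{-1}A_2(I-Q^-Q)$, and the use of the projector properties of $QQ^-$ and $Q^-Q$ are all correct, and conditions (i)--(iv) then follow exactly as you state. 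Your approach is more self-contained and makes explicit where each hypothesis is used; the paper's is shorter but hides the key step inside the invertibility of $B$. A point worth emphasizing: your ``natural requirement'' that $rank\{A_1P^{-1}A_2\}$ equal the common rank of $A_1$ and $A_2$ is not a cosmetic remark --- it is a genuinely needed extra hypothesis that the lemma as stated omits. Without it the claimed formula fails (take $P=I$, $A_1 = e_1e_1^*$, $A_2=e_2e_2^*$, so $Q=0$, $X=0$, and $MM^-$ is not symmetric), and the paper's own argument silently relies on the same condition, since the invertibility of its core block $B$ is equivalent to $rank\{A_1P^{-1}A_2\}=rank\{A_1\}=rank\{A_2\}$. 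So your proof not only reaches the conclusion by an independent path but also makes visible a gap in the statement that the paper's proof glosses over.
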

\espace1\begin{proof}
Let $A_i=U_iD_iV_i^*$ denote the SVD of $A_i$. Then define the matrix
$$\bar{M} = \left( \begin{array}{cc} U_2^*&0\\0&U_1^* \end{array} \right)
~M~ \left( \begin{array}{cc} V_1&0\\0&V_2 \end{array} \right), $$
and apply the usual inversion lemma to the invertible square portion of
$\bar{M}$, denoted $B$. In other words we have:
$$\bar{M} = \left( \begin{array}{cc} B&0\\0&0\end{array} \right)
{\rm ~ and ~}
\bar{M}^- = \left( \begin{array}{cc} B^{-1}&0\\0&0\end{array} \right).$$
The last lines of the proof are then just obvious manipulations.
\end{proof}

\espace1\begin{corollary}\label{cor:pinvRect}
Let $A$ be an $m \times n$ rectangular matrix with $m > n$.
Then the displacement rank of its MP pseudo inverse verifies 
$$
 \delta^\Delta_{N,Z}\{A^-\} \leq 3 \delta^\Delta_{N,Z}\{A^*\} +
2 \delta^\Delta_{Z,N}\{A\} + 2 \delta^\nabla_{N,N^*}\{I_m\}.
$$
\end{corollary}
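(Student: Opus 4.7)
The plan is to adapt the proof of Corollary \ref{cor:invRect} from the full-rank case to the rank-deficient one: write $A^{-}$ as a product of $A^{*}$ with a square pseudo-inverse, apply Theorem \ref{th:prod_rectang}, and control the pseudo-inverse of the square factor by Theorem \ref{th:pinv} instead of Theorem \ref{th:inverse}. Because $A^{*}A$ is $n\times n$ Hermitian (hence $I$-symmetric) and Theorem \ref{th:pinv} is formulated for $\nabla$-displacements, two applications of Theorem \ref{th:linkDN} will be needed to bring the bound back to a $\Delta$-displacement; those conversions are what produce the identity correction $2\,\delta^{\nabla}_{N,N^{*}}\{I_{m}\}$.

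In detail, I would proceed as follows. \textbf{(i)} Start from the identity $A^{-}=(A^{*}A)^{-}A^{*}$ and apply Theorem \ref{th:prod_rectang}, choosing the two intermediate $n\times n$ indices both equal to $N$ so that the middle $\Delta_{N,N}\{I_{n}\}$ term vanishes by~(\ref{eq:DeltaSS_I}):
\[
\delta^{\Delta}_{N,Z}\{A^{-}\}\;\leq\;\delta^{\Delta}_{N,N}\{(A^{*}A)^{-}\}\;+\;\delta^{\Delta}_{N,Z}\{A^{*}\}.
\]
\textbf{(ii)} Convert the first term to a $\nabla$-displacement via Theorem \ref{th:linkDN}, then invoke Theorem \ref{th:pinv} with $P=I$ on the $I$-symmetric matrix $A^{*}A$; this yields a bound of the form $2\,\delta^{\nabla}\{A^{*}A\}+\delta^{\nabla}\{I\}$. \textbf{(iii)} Bound $\delta^{\nabla}_{K,K}\{A^{*}A\}$ by the $\nabla$-analogue of the differentiation identity behind Corollary \ref{cor:product} --- namely $\nabla_{K,K}(A^{*}A)=\nabla_{K,K'}(A^{*})\cdot A + KA^{*}\cdot\Delta_{K',K}(A)$ for a well-chosen $m\times m$ index $K'$ --- so that one piece becomes a $\delta^{\Delta}_{Z,N}\{A\}$ term directly, while the other (a $\nabla$-displacement of $A^{*}$) is converted back to $\delta^{\Delta}_{N,Z}\{A^{*}\}$ by a second application of Theorem \ref{th:linkDN}, absorbing the second $\delta^{\nabla}\{I\}$ correction. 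Tallying: $\delta^{\Delta}_{N,Z}\{A^{*}\}$ receives one copy from step (i) and two more from the doubled bound in step (ii) carried through step (iii), giving coefficient $3$; $\delta^{\Delta}_{Z,N}\{A\}$ is doubled by the factor of $2$ supplied by Theorem \ref{th:pinv}, giving coefficient $2$; and the two conversions of Theorem \ref{th:linkDN} yield the residual $2\,\delta^{\nabla}_{N,N^{*}}\{I_{m}\}$.

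The main obstacle will be the tension between Theorems \ref{th:pinv} and \ref{th:linkDN}: for $P=I$, the former only controls $\delta^{\nabla}_{K,K}$ whose left and right indices coincide, whereas each use of the latter replaces one index by its transpose. Choosing the intermediate indices in steps (i) and (iii) so that every surviving transposed index can be either absorbed into a $\delta^{\nabla}_{\cdot,\cdot^{*}}\{I\}$ term or rewritten, via the transpositional identity $\delta^{\Delta}_{Z^{*},N^{*}}\{A\}=\delta^{\Delta}_{N,Z}\{A^{*}\}$, into one of the desired structural terms is the delicate computational heart of the argument. It is precisely this index mismatch that forces the identity correction $2\,\delta^{\nabla}_{N,N^{*}}\{I_{m}\}$ to be unavoidable in the general, $P$-asymmetric setting, and that prevents the plain Corollary \ref{cor:invRect} bound from carrying over verbatim to the rank-deficient rectangular case.
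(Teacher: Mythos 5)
Your proposal follows exactly the paper's route: the paper's entire proof is the single sentence ``Write $A^-$ as $(A^*A)^-A^*$, apply theorem \ref{th:pinv} to the square matrix $(A^*A)$, and then apply the product rule given in corollary \ref{cor:invRect}'', which is precisely your steps (i)--(iii), with the $\nabla$/$\Delta$ conversions via theorem \ref{th:linkDN} accounting for the two $\delta^\nabla_{N,N^*}\{I\}$ corrections just as you describe. The index-matching difficulty you flag at the end (theorem \ref{th:pinv} with $P=I$ only controls $\nabla$-displacements with equal left and right indices, whereas each use of theorem \ref{th:linkDN} transposes one of them) is genuine, but the paper's one-line proof does not address it either, so your account is if anything more explicit than the original.
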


\espace1\begin{proof}
Write $A^-$ as $(A^*A)^-A^*$, apply theorem \ref{th:pinv} to the
square matrix $(A^*A)$, and then apply the product rule given in
corollary \ref{cor:invRect}.
\end{proof}

\medskip
%%%%%%%%%%%%%%%%%%%%%%%%%%%%%%%%%%%%%%%%%%%%%%%%%%%%%%%%%%%%%%%%%%%%%%%%%%%%%%
\section{Concluding remarks}
In this paper various aspects of the displacement rank concept were addressed
in a rather general framework. In particular, displacement properties of
rank-deficient matrices were investigated. However the bounds given in
corollaries \ref{cor:invRect} and \ref{cor:pinvRect} are obviously too large.
It is suspected that corollary \ref{cor:pinvRect} could be improved to
$\delta\{B\} \leq 2\delta\{A\} + \delta\{I\}$ in most cases. On the other
hand, particular examples have been found showing that the bounds given in
other theorems are indeed reached (in particular theorems \ref{th:pinv} and
\ref{th:rank_pinv}).
\par Another major limitation of this work lies in the fact that our proofs
are in general not constructive, in the sense that they do not define suitable
algorithms having the expected complexity. This is now the next question to
answer. First ideas in this direction can be found in \cite{C89} and
\cite{HR84} and could be used for this purpose.
\par The author thanks Georg Heinig for his proofreading of the paper.
\medskip
%%%%%%%%%%%%%%%%%%%%%%%%%%%%%%%%%%%%%%%%%%%%%%%%%%%%%%%%%%%%%%%%%%%%%%%%%%%%%%

\medskip

\rule{2cm}{0.4mm}

Published in: {\em Linear Algebra for Signal Processing}, A.~Bojanczyk
and G.~Cybenko editors, vol.69, {\em IMA volumes in Mathematics and
its Applications}, pp.1-16, Springer Verlag, 1995.
\enddocument